\algnewcommand\algorithmicinput{\textbf{INPUT:}}
\algnewcommand\INPUT{\item[\algorithmicinput]}
\algnewcommand\algorithmicoutput{\textbf{OUTPUT:}}
\algnewcommand\OUTPUT{\item[\algorithmicoutput]}
\newtheorem{theorem}{Theorem}
\newtheorem{lemma}[theorem]{Lemma}
\newtheorem{proposition}[theorem]{Proposition}
\newtheorem{corollary}[theorem]{Corollary}
\newtheorem{definition}{Definition}
\newtheorem{remark}{Remark}
\newtheorem{assumption}{Assumption}
\DeclareMathOperator*{\esssup}{ess\,sup}
\DeclareMathOperator*{\loglog}{loglog}
\title{A Note on Online Change Point Detection}
\author[1]{Yi Yu}
\author[2]{Oscar Hernan Madrid Padilla}
\author[3]{Daren Wang}
\author[4]{Alessandro Rinaldo}
\affil[1]{Department of Statistics, University of Warwick}
\affil[2]{Department of Statistics, University California, Los Angeles}
\affil[3]{Department of Statistics, University of Chicago}
\affil[4]{Department of Statistics \& Data Science, Carnegie Mellon University}
\date{\today}
\begin{document}

\maketitle

\begin{abstract}
We investigate sequential change point estimation and detection in univariate nonparametric settings, where a stream of independent observations from sub-Gaussian distributions with a common variance factor and piecewise-constant but otherwise unknown means are collected. We develop a simple CUSUM-based methodology that provably control the probability of false alarms or the average run length while minimizing, in a minimax sense, the detection delay.  We allow for all the model parameters to vary in order to capture a broad range of levels of statistical hardness for the problem at hand.  We further show how our methodology is applicable to the case in which multiple change points are to be estimated sequentially. 

\textbf{Keywords}: Online change point detection; Cumulative sum statistics; Multiple change points. 
\end{abstract}

\section{Introduction}\label{sec-introduction}

We investigate what may arguably be regarded as one of the most basic nonparametric online change point settings, whereby a sequence of independent, univariate sub-Gaussian random variables with known variance factor and unknown piecewise-constant means are  observed sequentially. We seek to determine in an online manner, i.e.~each time we acquire a new observation, whether the data collected so far provide sufficient evidence to conclude that the mean of the distribution has changed at the present time or in the near past. The quality of any online procedure deployed for such a task is characterized based on the type I error, i.e.~the probability of incorrectly declaring that a change has taken place, and on the delay it incurs before correctly identifying a change point. Ideally, a good online procedure should guarantee a small false alarm probability while suffering only a minimal detection delay. It should be intuitively clear that these two features are at odds with each other: a procedure with a small probability of false alarm is likely to react slowly to even relatively big changes in the mean of the underlying distribution, thus producing a large detection delay. Vice versa, a methodology that is very sensitive to fluctuations in the data is unlikely to reliably discriminate between noise and signal and is thus prone to raising false alarms. In order to characterize this trade-off we keep track explicitly of all the parameters affecting the difficulty of the change point detection task, such  as the sub-Gaussian variance factor, the magnitude and time of the distributional change and the targeted type I error. Our goal is to develop an online procedure that provably works under the most unfavourable settings for which inference is just barely possible. 

We begin by formalizing the problem with a general assumption used throughout.

\begin{assumption}\label{assump-1}
Assume that $\{X_1, X_2, \ldots\}$ is a sequence of independent random variables with unknown means $\mathbb{E}(X_i) = f_i$, $i = 1, 2, \ldots$ and such that $\sup_{i = 1, 2, \ldots}\|X_i\|_{\psi_2} \leq \sigma$.
\end{assumption}

 We recall that for a random variable $X$, its  Orlicz-$\psi_2$-norm is defined as
	\[
		\|X\|_{\psi_2} = \inf\left\{t > 0: \, \mathbb{E}\{\exp(X^2/t^2)\} \leq 2\right\}.
	\]	

At first we consider the change point model for which the means of the observations change after collecting $\Delta$ - an unknown number of observations, by an unknown amount $\kappa$, as described in the next assumption. Extension to models allowing for multiple change points will be discussed in Section \ref{sec-multi}. 

\begin{assumption}\label{assump-2}
Assume that there exists a positive integer $\Delta \geq 1$ such that
	\[
		f_1 = \cdots = f_{\Delta} \neq f_{\Delta + 1} = f_{\Delta + 2} = \cdots .
	\]	
	In addition, let 
	\[
		\kappa = |f_{\Delta} - f_{\Delta + 1}|.
	\]
\end{assumption}

We will write the probability of any event with respect to any distribution consistent with Assumptions \ref{assump-1} and \ref{assump-2} as $\mathbb{P}_{\Delta}\{ \cdot \}$ and, similarly, we will use $\mathbb{E}_{\Delta}\{\cdot\}$ for the corresponding expectation.  With a slight abuse of notation, we describe the case in which the observations have constant means by setting $\Delta = \infty$ and will use the corresponding notation $\mathbb{P}_{\infty}\{ \cdot \}$ and~$\mathbb{E}_{\infty}\{\cdot\}$.  With this notation, we remark that the change point is $\Delta + 1$ and $\Delta$ is the sample size of the observations from the pre-change distribution.

Our main goal is to design an online procedure that is provably able to detect a change point soon after time $\Delta$ and with a controlled false alarm probability, denoted throughout as $\alpha$. In detail, an online change point detection procedure is an extended stopping time $\widehat{t}$ taking values in $\mathbb{N} \cup \{ \infty \}$ with respect to the natural filtration generated by the data. The false alarm probability of an online change point procedure is given as
\[
	\mathbb{P}_{\Delta} \left\{ \widehat{t} \leq \Delta\right\}  \text{ for any } \Delta < \infty \quad \text{ and } \quad  \mathbb{P}_{\infty} \left\{  \widehat{t} < \infty \right\} \text{ otherwise,}
\]
and the detection delay is the random variable
\[
\left(\widehat{t} - \Delta\right)_+,
\]
which is only defined provided that $\Delta < \infty$. We will develop procedures that guarantee that (i) $\mathbb{P}_{\infty} \left\{  \widehat{t} < \infty \right\} \leq \alpha$, for a user-defined target false alarm probability $\alpha$ or $\mathbb{E}_\infty(\widehat{t}) \geq \gamma$ for a user-defined average run length value $\gamma$, and (ii) that, 	at the same time, for all $\Delta < \infty$, $\left(\widehat{t} - \Delta\right)_+$ is minimal.

The setting described in Assumptions~\ref{assump-1} and \ref{assump-2} allows one to completely characterize the hardness of the problem -- measured both by the false alarm probability and the detection delay -- as a function of the  upper bound $\sigma$ on the fluctuations, the target probability of false alarm $\alpha$, the pre-change sample size $\Delta$ and the jump size $\kappa$.  Intuitively, the difficulty of the change point detection task is increasing in $\sigma$ and decreasing in $\Delta$, $\alpha$ and $\kappa$. To formally capture this important aspect of the problem, we implicitly assume a sequence of change point models with respect to which the data may have originated. Accordingly, the parameters defining the statistical task at hand, namely the quadruple $(\Delta,\alpha, \sigma^2,\kappa)$, are not fixed but should be instead viewed as sequences, expressing a spectrum of levels of difficulty of the problem we are interested in. 

We make the following contributions:
\begin{itemize}
\item We develop a CUSUM-based procedure with a false alarm probability control, yielding a detection delay of order 
\begin{equation}\label{eq:rate.intro}
\sigma^2\kappa^{-2} \log(\Delta/\alpha)
\end{equation}
 with probability at least $ 1- \alpha$, for all $\Delta < \infty$ satisfying the signal-to-noise ratio condition
\begin{equation}\label{eq:snr.intro}
\Delta \kappa^2 \sigma^{-2} \succeq \log(\Delta/\alpha).
\end{equation}
Interestingly, the above expression matches exactly the signal-to-noise ratio quantity for the offline version of the same change point detection problem, as demonstrated for example in \cite{wang2018univariate}. We elaborate further on this connection later on in \Cref{sec-offline}.
\item We show that a straightforward  modification of our procedure guarantees that, for any target average run length $\gamma \geq \Delta$, $\mathbb{E}_{\infty}(\widehat{t}) \geq \gamma$ and, at the same time, with probability $ 1 - \gamma^{-1}$ the detection delay is of order $\sigma^2 \kappa^{-2} \log(\gamma)$, for all $\Delta \leq \gamma$ such that $\Delta \kappa^2 \sigma^{-2} \succeq \log(\gamma)$.
\item We construct lower bounds indicating that the magnitude of the detection delay \eqref{eq:rate.intro} is essentially nearly optimal, in a minimax sense, save for a logarithmic term, whenever the signal-to-noise ratio condition \eqref{eq:snr.intro} is in effect. 
\item We generalize  our procedure to the case of multiple change points and show that, in this setting, the signal-to-noise condition \eqref{eq:snr.intro} is in fact necessary for online change point localization. 
\item We discuss variants of our methodology that incur smaller computational costs. \cite{chen2020high} mentioned that the computational cost of an online procedure should be of linear order of the number of time points.  We would like to emphasize that this claim holds when both the before and after change point distributions are exactly known, which is essentially the situation discussed in \cite{chen2009kernel}.  On the contrary, in our paper, we deal with sub-Gaussian distributions with unknown means.  In this situation, we are not aware of nor expect to see any theoretically-justified methods with linear order computational costs.
\end{itemize}

The paper is organized as follows. Section \ref{sec:relavant_work}  provides a review of related work on online and offline change point detection.  The main algorithms are then presented in Section \ref{sec-upperbounds}. In particular, Section \ref{sec-delay} contains the theoretical guarantees of the main algorithm, Section \ref{sec-variants} discusses the variants, and some practical implementation aspects are investigated in Section \ref{sec-practical}. Extensions to multiple change points settings are the subject of Section \ref{sec-multi}.  Section \ref{sec-lowerbounds}  then provides a lower bound on the detection delay  which shows that our algorithm is nearly optimal.  The paper concludes with a discussion in Section \ref{sec-discussions}.  All the proofs are deferred to the Appendices.

\subsection{Relevant literature}	
\label{sec:relavant_work}

\cite{Wald1945}, as a prelude of the sequential analysis, kicked off the statistical research on online change point detection problems.  A famous extension of \cite{Wald1945} is the CUSUM statistic proposed in \cite{Page1954}.  The optimality of \cite{Wald1945} and \cite{Page1954} was, to the best of our knowledge, studied first in \cite{lorden1971procedures}, which showed that among all the estimators which have average run lengths lower bounded by $\gamma$, the optimal detection delay rate is of order $\log(\gamma)/\mathrm{KL}(F_0, F_1)$, as $\gamma \to \infty$, where $\mathrm{KL}(\cdot, \cdot)$ is the Kullback--Leiber divergence.  \cite{moustakides1986optimal} and \cite{ritov1990decision} reiterated this minimax result and showed that in the optimality framework studied in \cite{lorden1971procedures}, the CUSUM statistic is optimal.  Similar results have also been derived in the same framework using a change-of-measure argument under more general assumptions in \cite{lai1981asymptotic}, \cite{lai1998information} and \cite{lai2001sequential}, among others.  In almost all of the second half of the 20th century, the research on online change point detection focused on optimizing the expected delay time.  The motivations back then were mainly from the manufactory sector, with quality control as the centre applications.  The data type studied were mainly univariate sequences, and the results were almost all asymptotic.  We refer readers to \cite{lai1995sequential, lai2001sequential} for comprehensive reviews.  

Before proceeding, we would like to emphasize that there is a fundamental difference between the optimality results derived in the aforementioned work and the ones developed by us in \Cref{sec-lowerbounds}.  In short, when considering the minimax lower bounds, the previous work only allows the change point location to vary and the optimality is derived in an asymptotic sense, by letting the lower bound of the average run length diverge.  In this paper, we let all model parameters vary with the location of the change point when deriving the minimax lower bounds, and allow for fixed sample arguments.

The second act of online change point research was kicked off by \cite{chu1996monitoring}, who formally stated the existence of `noncontamination' data that one has a training data set of size $m$, i.e.~$X_i \sim F_0$, $i = 1, \ldots, m$.  The theoretical results built upon the above assumption are asymptotic in the sense that one lets $m$ grow unbounded.  One can control the type I error with this noncontamination condition.  Since \cite{chu1996monitoring}, a large number of papers have been produced in this line of work, including univariate mean change \citep[e.g.][]{aue2004delay, kirch2008bootstrapping}, linear regression coefficients change \citep[e.g.][]{aue2009delay, huvskova2012bootstrapping}, multivariate mean and/or variance change \citep[e.g.][]{mei2010efficient}, univariate nonparametric change \citep[e.g.][]{huvskova2010fourier, hlavka2016bootstrap, desobry2005online}, Bayesian online change point detection \citep[e.g.][]{fearnhead2007line}, to name but a few.   More recent work includes \cite{he2018sequential}, which studied the sequential change point detection in a sequence of random graphs.  \cite{kirch2018modified} used estimating equations as a unified framework to include the location shift, linear regression and autoregressive online change point detection.  \cite{kurt2018real} converted different high-dimensional and/or nonparametric data to a univariate statistic and used geometric entropy minimisation methods to define an acceptance region.  \cite{chen2019sequential} constructed similarity measures via $K$-nearest neighbour estimators and then proposed a counting-based statistic to conduct sequential change point detection.   \cite{dette2019likelihood} proposed a general framework for sequential change point detection case and obtained a limiting distribution of the proposed statistics.  The framework can be used to handle high-dimensional and nonparametric cases.  \cite{gosmann2019new} exploited a likelihood ratio based method and shared similar core techniques with \cite{dette2019likelihood}.   \cite{keshavarz2018sequential} considered online change point detection in a sequence of Gaussian graph models and obtained asymptotic type I and II error controls.  \cite{chen2020high} considered online change point detection in a sequence of Gaussian random vectors where the mean changes over time.  Comprehensive monographs and survey papers include \cite{siegmund2013sequential}, \cite{tartakovsky2014sequential} and \cite{namoano2019online}.

We would like to mention \cite{maillard2019sequential}.  This paper is the most relevant paper to ours, to the best of our knowledge, and has heavily inspired our paper.  \cite{maillard2019sequential} studied a univariate online mean change point detection problem and deployed the Laplace transform to control the probabilities of the events, on which the fluctuations are contained within desirable ranges, although the arguments thereof remain doubtful.  Given the large probability events established based on the Laplace transforms, type I error controls and large probability detection delays were studied.  A claim on the phase transition and robust analysis under multiple change points scenario were also available in \cite{maillard2019sequential}.  There are a number of differences between this paper and \cite{maillard2019sequential}: (i) Instead of using the Laplace transform to establish large probability events, we summon the concentration inequalities for sub-Gaussian random variables, the union bound results and the peeling arguments.  It was pointed out in the Discussion in \cite{maillard2019sequential}, other more advanced tools including the peeling arguments may improve the results by changing logarithmic terms to iterative logarithmic terms. We are, however, skeptical about the feasibility of such claim.  (ii)  In addition to the type I errors, which are available in \cite{maillard2019sequential}, we also provide average run length results and a parallel set of results by setting a lower bound for the average run lengths.  This is a common practice in applications and is widely used in the existing literature \citep[e.g.][]{lai1981asymptotic}.  (iii)  Despite that \cite{maillard2019sequential} has much more modern arguments than papers in the 20th century, the results are presented in a more restrictive way.  For instance, the `phase transition' and `detectability' are presented as a property of the location of the change point only.  In this paper, we will exploit a signal-to-noise ratio, which is a function of the jump size, the variance and the change point location jointly.  This setup enables further studies of high-dimensional data problems.

An important aspect of our problem that sets it aside from many of the contributions in the area is the fact that our procedure is not sensitive to the values of the pre- and post-change means, but only to their difference. Formally, we consider composite null and alternative hypotheses for the pre- and post-change distributions, respectively. This feature, combined with the nonparametric nature of our sub-Gaussianity assumption, which is not enough to specify likelihood functions, poses additional and significant challenges compared to the simpler case in which the pre- and post-change  distributions are known. From a statistical standpoint, it prevents us from relying on likelihood-ratio-type procedures, which are analytically simpler and have optimality properties. Computationally, it requires us to re-evaluate our test statistic over past observations, adding to the computational burden and thus making the design of sequential procedures challenging. 

\section{Detection delay and type I error controls} \label{sec-upperbounds}

We are going to consider a simple online procure, described in \Cref{alg-online},  that keeps track of the running maximum of the standard CUSUM statistics, formally defined next, and declares that a change point has occurred as soon as its value exceeds a pre-specified, time-dependent threshold.  Later in the paper, we will discuss different variants of this simple method, but they all share this basic core principle. 

\begin{definition}[CUSUM statistic] \label{def-cusum}
Given a sequence $\{X_t\}_{t = 1, 2, \ldots} \subset \mathbb{R}$ and a pair of integers $1 \leq s < t$, we define the CUSUM statistic and its population version as
	\[
		\widehat{D}_{s, t} = \left|\left(\frac{t-s}{ts}\right)^{1/2}\sum_{l = 1}^s X_l - \left\{\frac{s}{t(t-s)}\right\}^{1/2}\sum_{l = s+1}^t X_l\right| 
	\]
	and
	\[
		D_{s, t} = \left|\left(\frac{t-s}{ts}\right)^{1/2}\sum_{l = 1}^s f_l - \left\{\frac{s}{t(t-s)}\right\}^{1/2}\sum_{l = s+1}^t f_l\right|,
	\]
	respectively.  
\end{definition}

\begin{algorithm}[!h]
	\begin{algorithmic}
		\INPUT $\{X_u\}_{u=1, 2, \ldots} \subset \mathbb{R}$, $\{b_{u}, u = 2, 3, \ldots\} \subset \mathbb{R}$.
		\State $t \leftarrow 1$
		\While{$\mathrm{FLAG} = 0$}  
			\State{$t \leftarrow t + 1$;}
			\State{$\mathrm{FLAG} = 1 - \prod_{s = 1}^{t-1} 1\left\{\widehat{D}_{s, t} \leq b_{t}\right\}$;}
		\EndWhile
		\OUTPUT $t$.
		\caption{Online change point detection via CUSUM statistics} \label{alg-online}
	\end{algorithmic}
\end{algorithm}

\Cref{alg-online} scans through the data sequence using the CUSUM statistic and a sequence of pre-specified positive threshold values.  For any time point $t \geq 2$, as long as there exists an integer $s \in [1, t)$, such that the corresponding CUSUM statistic $\widehat{D}_{s, t}$ exceeds the threshold $b_{t}$, we decide that a change in mean has occurred prior to the current time point $t$.  \Cref{alg-online} is written in the way that it will never terminate if there is no change point declared.  In practice, \Cref{alg-online} is terminated based on a stopping rule decided by the user and our theory accommodates this.

In the rest of this section, we will first focus on one version of the detection procedure, providing its detection delay and type I error control in \Cref{sec-delay}.  We will then discuss two other common alternative procedures, together with their performances and connections among all three procedures in \Cref{sec-variants}.  To conclude this section, we will discuss some practical issues in \Cref{sec-practical}.

\subsection{Anlysis of \Cref{alg-online}}\label{sec-delay}

In this section, we analyze the performance of \Cref{alg-online}. Here and in the rest of the paper, we adopt a high-dimensional framework of reference, whereby we implicitly assume a sequence of change point models with respect to which the data may have originated. Accordingly, the parameters defining the statistical task at hand, namely the pre-change sample size $\Delta$ when it is finite, the variance parameter $\sigma$, the magnitude $\kappa$ of the change and the false alarm probability $\alpha$ are not fixed but are instead allowed to vary, in order to express a spectrum of levels of difficulty of the problem we are interested in. This is of course a convenient mathematical formalization, and is not intended to represent any real-life situation.  Since our theoretical guarantees will be formulated based on finite sample bounds, this formalism does not pose any issue.  

We begin by specifying the signal-to-noise ratio condition that we will require in our analysis.

\begin{assumption}\label{assump-snr}
There exists a sufficiently large absolute constant $C_{\mathrm{SNR}} > 0$ such that 
	\[
		\Delta \kappa^2 \sigma^{-2} \geq C_{\mathrm{SNR}} \log(\Delta/\alpha).
	\]	
\end{assumption}

 It is worth emphasizing that the above condition should be interpreted as a constraint that is expected to be satisfied for all values of the parameters $(\Delta,\sigma^2,\kappa, \alpha)$ simultaneously. For example, assume the type I error level $\alpha$ and the jump size parameter $\kappa$ to be fixed. Then, the above condition will constrain how large the variance parameter $\sigma^2$ is allowed to be in relation to $\Delta$ and, vice-versa, how small $\Delta$ can be with respect to $\sigma^2$. We will refer to any relationship that holds among $(\Delta,\sigma^2,\kappa, \alpha)$, such as the one expressed in Assumption~\ref{assump-snr}, as a scaling.

In our main result, we show that using, the CUSUM statistic and under the signal-to-noise ratio condition in Assumption~\ref{assump-snr}, \Cref{alg-online} is able to detect the change point with a detection delay of order up to $\sigma^2\kappa^{-2}\log(\Delta/\alpha)$, with probability at least $1 - \alpha$, using time-varying thresholds of order $\sigma \log^{1/2}(t/\alpha)$.  This rate is nearly minimax optimal, aside from a $\log(\Delta)$ factor, as we show in \Cref{sec-lowerbounds}.  

\begin{theorem}\label{thm-2}
Consider the settings described in Assumption~\ref{assump-1}. Let $\alpha \in (0, 1)$ and $\widehat{t}$ be the stopping time returned by \Cref{alg-online} with inputs $\{ X_t \}_{t=1,2,\ldots}$ and $\{ b_t \}_{t=2,3,\ldots}$, where
	\begin{equation}\label{eq-b-2}
			b_{t} = 2^{3/2}\sigma\log^{1/2}(t/\alpha).
		\end{equation}
		 If $\Delta = \infty$, then
	\begin{equation}\label{eq:thm-2.delta.infty}
		\mathbb{P}_{\infty} \left\{ \widehat{t} < \infty \right\} < \alpha.
		\end{equation}
Under Assumption \ref{assump-2}, we have
\begin{equation}\label{eq:thm-2.any.delta}
	\mathbb{P}_{\Delta} \left\{ \widehat{t} \leq  \Delta \right\} < \alpha,
\end{equation}
for any $\Delta \geq 1$. 
		If Assumptions \ref{assump-2} and \ref{assump-snr} both hold, then 
		\begin{equation}\label{eq:thm-2.main}
	\mathbb{P}_{\Delta} \left\{ \Delta < \widehat{t}  \leq  \Delta + C_d\frac{\sigma^2 \log(\Delta/\alpha)}{\kappa^2}\right\} \geq 1 - \alpha,
	\end{equation}
	where $C_d > 0$ is an absolute constant with $C_d < C_{\mathrm{SNR}}$.
\end{theorem}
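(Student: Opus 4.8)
The plan is to reduce all four assertions to a single fluctuation bound on the centered CUSUM statistic, and then treat the no-alarm and detection parts on a common high-probability event. First I would record the key algebraic fact that, writing $Z_{s,t}$ for the signed quantity inside the absolute value defining $\widehat{D}_{s,t}$ and $\mu_{s,t}$ for its population counterpart, the difference $Z_{s,t}-\mu_{s,t}=\sum_{l=1}^{t}a_l(X_l-f_l)$ is a weighted sum of independent, mean-zero sub-Gaussian variables whose squared weights sum to exactly $1$ (the first block contributes $s\cdot\frac{t-s}{ts}=\frac{t-s}{t}$ and the second $(t-s)\cdot\frac{s}{t(t-s)}=\frac{s}{t}$). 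By the reverse triangle inequality $|\widehat{D}_{s,t}-D_{s,t}|\le|Z_{s,t}-\mu_{s,t}|$, so a sub-Gaussian tail bound of the form $\mathbb{P}\{|\widehat{D}_{s,t}-D_{s,t}|>x\}\le 2\exp(-x^2/(2\sigma^2))$ follows from the moment generating function bound for sums of independent sub-Gaussians, which I would isolate as a preliminary lemma. With $b_t=2^{3/2}\sigma\log^{1/2}(t/\alpha)$ this yields $\mathbb{P}\{|\widehat{D}_{s,t}-D_{s,t}|>b_t\}\le 2(\alpha/t)^{4}$, the exponent $4$ being exactly what makes the ensuing double sum over $(s,t)$ converge.

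For the false-alarm bounds \eqref{eq:thm-2.delta.infty} and \eqref{eq:thm-2.any.delta} I would use that whenever $t\le\Delta$ (all $t\ge2$ when $\Delta=\infty$) the means $f_1,\dots,f_t$ are constant, so $D_{s,t}=0$ and $\widehat{D}_{s,t}=|Z_{s,t}-\mu_{s,t}|$ is pure fluctuation. A union bound then gives
\[
\mathbb{P}_{\Delta}\{\widehat{t}\le\Delta\}\le\sum_{t=2}^{\Delta}\sum_{s=1}^{t-1}\mathbb{P}\{\widehat{D}_{s,t}>b_t\}\le\sum_{t=2}^{\infty}(t-1)\,2(\alpha/t)^{4}\le 2\alpha^{4}\sum_{t\ge2}t^{-3}<\alpha,
\]
and the identical computation with the sum running over all $t\ge2$ proves the $\Delta=\infty$ case. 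The infinite range of $t$ in the latter is precisely why the threshold must grow like $\log^{1/2}(t/\alpha)$: the $\log t$ inside guarantees summability.

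The substance is the detection-delay bound \eqref{eq:thm-2.main}, which I would obtain by exhibiting a single pair that triggers the alarm quickly. Taking $s=\Delta$, a direct computation gives the clean identity $D_{\Delta,t}=\kappa\{\Delta(t-\Delta)/t\}^{1/2}$. I would set $t^\star=\Delta+d$ with $d\asymp C_d\sigma^2\kappa^{-2}\log(\Delta/\alpha)$ (the integer rounding absorbed into the constant); since $C_d<C_{\mathrm{SNR}}$, Assumption~\ref{assump-snr} forces $d<\Delta$, whence $\Delta(t^\star-\Delta)/t^\star\ge d/2$ and therefore $D_{\Delta,t^\star}\ge\kappa\sqrt{d/2}\ge\sqrt{C_d/2}\,\sigma\log^{1/2}(\Delta/\alpha)$. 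Because $t^\star\le2\Delta$ we also have $b_{t^\star}\le 4\sigma\log^{1/2}(\Delta/\alpha)$, so for $C_d$ large enough the gap $D_{\Delta,t^\star}-b_{t^\star}$ exceeds a fixed multiple of $\sigma\log^{1/2}(\Delta/\alpha)$. Defining the good event $\mathcal{E}$ as the intersection of $\{\widehat{D}_{s,t}\le b_t\ \text{for all}\ s<t\le\Delta\}$ with $\{|\widehat{D}_{\Delta,t^\star}-D_{\Delta,t^\star}|\le D_{\Delta,t^\star}-b_{t^\star}\}$, on $\mathcal{E}$ the first event forces $\widehat{t}>\Delta$ while the second forces $\widehat{D}_{\Delta,t^\star}>b_{t^\star}$ and hence $\widehat{t}\le t^\star$. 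It remains to bound $\mathbb{P}(\mathcal{E}^c)$: the first part is $<\alpha$ by the display above (in fact $O(\alpha^4)$) and the second is at most $2\exp\{-c\log(\Delta/\alpha)\}$ by the fluctuation bound, so the slack left by the loose union bound absorbs it and $\mathbb{P}(\mathcal{E}^c)<\alpha$.

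The main obstacle I anticipate is calibrating constants so that everything closes simultaneously: the concentration constant in the tail bound, the centering loss in passing from $\|X_l\|_{\psi_2}\le\sigma$ to a usable proxy for $\|X_l-f_l\|_{\psi_2}$, and the exact value $2^{3/2}$ in $b_t$ must be compatible, so that the $(s,t)$-sum stays below $\alpha$ while the detection margin at $t^\star$ remains strictly positive. The decisive quantitative lever is that the scaling in Assumption~\ref{assump-snr} with $C_d<C_{\mathrm{SNR}}$ keeps $d<\Delta$, which simultaneously validates the lower bound $\Delta(t^\star-\Delta)/t^\star\ge d/2$ and ensures $\log(t^\star/\alpha)\asymp\log(\Delta/\alpha)$; without this the population CUSUM at $s=\Delta$ would fail to dominate the threshold, and the delay rate would degrade.
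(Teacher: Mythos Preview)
Your approach is correct and reaches the same rate, but it is organized differently from the paper's. The paper isolates a single uniform event
\[
\mathcal{B}=\bigl\{\forall t\ge2,\ \forall s\in[1,t):\ |\widehat{D}_{s,t}-D_{s,t}|<b_t\bigr\},
\]
proves $\mathbb{P}(\mathcal{B})>1-\alpha$ via a dyadic peeling over $t$ (Lemma~\ref{lem-alt-cumsum-prob}), and then argues \emph{deterministically} on $\mathcal{B}$: since $D_{s,t}=0$ for $t\le\Delta$ one gets $\widehat{t}>\Delta$ immediately, and for detection one needs only $D_{\Delta,t^\star}>2b_{t^\star}$, because on $\mathcal{B}$ one already has $\widehat{D}_{\Delta,t^\star}>D_{\Delta,t^\star}-b_{t^\star}$. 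Thus there is no second probabilistic event to union with; the factor $2$ in front of $b_{t^\star}$ is the entire cost of unifying false alarm and detection on one event. Your route replaces the peeling by a crude double sum $\sum_{t\ge2}\sum_{s<t}2(\alpha/t)^4$---which indeed converges and is $O(\alpha^4)$---and then handles detection with a \emph{separate} fluctuation bound at the single pair $(\Delta,t^\star)$. This is perfectly valid; the paper itself notes in the Discussion that peeling and plain union bounds yield the same rates here. What you lose is the clean probability accounting: you must now ensure that $O(\alpha^4)+2(\alpha/\Delta)^{c}<\alpha$ for \emph{all} $\alpha\in(0,1)$, which is delicate near $\alpha=1$ and forces you to inflate $C_d$ more than the paper does. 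What you gain is that you never need Lemma~\ref{lem-alt-cumsum-prob}; a one-line sub-Gaussian tail plus the observation that the CUSUM weights have unit $\ell_2$ norm suffices. Both proofs hinge on the same quantitative lever you identified: $C_d<C_{\mathrm{SNR}}$ forces $d<\Delta$, which keeps $t^\star\le2\Delta$ and hence $\log(t^\star/\alpha)\asymp\log(\Delta/\alpha)$.
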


\begin{remark}
In relation to the procedure described in \cite{lorden1971procedures}, a direct consequence of combining \Cref{thm-2} above and Theorem~2 in \cite{lorden1971procedures} is the following.  For $k \in \{1, 2, \ldots,\}$, let $\widehat{t}_k$ be the output of applying \Cref{alg-online} to $\{X_k, X_{k+1}, \ldots\}$ and let $\widehat{t}_* = \min\{\widehat{t}_k + k- 1, \, k = 1, 2, \ldots\}$.  We have that the average run length of $\widehat{t}_*$ satisfies $\mathbb{E}_0 (\widehat{t}_*) \geq 1/\alpha$.
\end{remark}

The proof of \Cref{thm-2} is given in \Cref{sec-proofs-theorems}. In our proofs we have not optimized the constants, which we conjecture can be made smaller through  a more refined analysis. As for the tuning parameter $b_t$ used by the procedure, we have made the multiplicative constant $2^{3/2}$ explicit, though any number larger than 2 can be used: see \eqref{eq-C-1-cond}.  In practice, since usually the variance parameter $\sigma$ is unknown, we recommend calibrating the values of $b_t$ via simulations, assuming the availability of data from a pre-change distribution, as it is often the case in applications. 

The guarantee \eqref{eq:thm-2.delta.infty} implies that, in the absence of any change points, the procedure will continue indefinitely with probability at least $ 1- \alpha$. On the other hand, when a change point does exist, as assumed in Assumption \ref{assump-2}, the bound \eqref{eq:thm-2.any.delta} guarantees that the false alarm probability of our procedure is no larger than $\alpha$, regardless of the actual value of the change point. 
Additionally, under the scaling on the signal-to-noise ratio given in Assumption \ref{assump-snr}, \eqref{eq:thm-2.main} provides a high-probability bound on the detection delay of order $\sigma^2\kappa^{-2} \log(\Delta/\alpha)$, which is increasing in $\sigma$ and $\Delta$ and decreasing in $\alpha$ and $\kappa$. In particular, combining \eqref{eq:thm-2.main} with Assumption~\ref{assump-snr}, we can see that the smaller $\alpha$ --  the tolerance on the type I error -- is,  the larger the required signal-to-noise ratio and the detection delay are.
As suggested by the minimax lower bound given below in Proposition~\ref{prop-lb}, our detection delay bound appears to be optimal. We refer readers to \Cref{sec-lowerbounds} for a detailed discussion of the optimality of our results and their relation to the existing literature. Finally, we remark the expression of detection delay bound is nearly identical to the one of the localization error found by \cite{wang2018univariate} for the problem of estimating the change points in offline settings, which too is nearly minimax rate-optimal. While this result may not be surprising, it is far from obvious. 

It is important to point out how the statement in \Cref{thm-2} implies that our algorithm, regarded as a testing procedure to detect the presence of a change point against the null hypothesis of no change point, has power at least $ 1 - \alpha$ under the signal to noise condition of Assumption~\ref{assump-2}. What is more, inspection of the proof shows that, should there be a change point at time $\Delta + 1$, the power of our CUSUM-statistic-based procedure is maximal immediately after $\Delta$ and degrades as times goes by. In particular, for any for all fixed $\Delta$, $\mathbb{P}_{\Delta}(\widehat{t}  < \infty) < 1$, i.e. the power of our procedure is non-trivial as we are not guaranteed that we will eventually detect a change point, no matter how many samples we acquire. This may be different in some existing literature \citep[e.g.][]{shin2020nonparametric}, but this is due to the fact that we are aiming for an overall control of the type I error, which is adopted in some literature \citep[e.g.][]{maillard2019sequential}.

To obtain a test of power approaching one it appears necessary to strengthen Assumption~\ref{assump-2} as follows. For each choice of $\Delta$, $\sigma$, $\kappa$ and $\alpha$ and each $m > \Delta$, assume that 
	\begin{equation}\label{eq-asyp-snr-cond}
		\Delta \kappa^2 \sigma^{-2} \geq C_{\mathrm{SNR}} \log(m/\alpha).
	\end{equation}
	The difference between Assumption~\ref{assump-2} and the previous condition is that the right hand side is allowed to be arbitrarily large in $m$ as opposed to be fixed to $\log(\Delta/\alpha)$.  
	Next, let $\mathbb{P}_{\Delta,m}(\cdot)$ denote the probability distribution of the data under condition \eqref{eq-asyp-snr-cond}. Then it is possible to show  that 
	\[
		\lim_{m \to \infty} \mathbb{P}_{\Delta,m}(\widehat{t} < \infty) = 1.
	\]
	Of course, even under \eqref{eq-asyp-snr-cond}, we still retain the same type I error guarantees that $\mathbb{P}_\infty(\widehat{t} < \infty) < \alpha $ and $\mathbb{P}_{\Delta,m}(\widehat{t} \leq \Delta) < \alpha$.

\Cref{thm-2} could be equivalently stated using random times and, in fact, more generally, stopping times, leading to a more practical interpretation. Such formulation is better suited to accommodate real-life situations, where the experiment cannot continue indefinitely and is instead ultimately terminated based on a predefined, possibly random, stopping rule.
 Specifically, the proof of \Cref{thm-2} also implies that,  with probability at least $1- \alpha$ and uniformly over all  finite random times $T$, it holds that
\begin{itemize}
\item $\widehat{t} > T$ if $\Delta = \infty$ or if $T \leq \Delta$,
\item $\Delta < \widehat{t} \leq \Delta + C_d\frac{\sigma^2 \log(\Delta/\alpha)}{\kappa^2}$, if $T > \Delta + C_d\frac{\sigma^2 \log(\Delta/\alpha)}{\kappa^2}$.
\end{itemize}
Whenever $T$ happens to be between  $\Delta$ and $\Delta + C_d \sigma^2 \kappa^{-2}\log(\Delta/\alpha)$, then the procedure is only guaranteed to not raise a false alarm with probability at least $1 - \alpha$, at any time prior to and including $\Delta$ but may not be able to reach the correct decision that a change point has occurred since there are not enough observations between $\Delta$ and $T$ to make such a determination.

The proof of \Cref{thm-2}, as well as of most of the results in the paper, relies in a fundamental way on an auxiliary result given in Lemma~\ref{lem-alt-cumsum-prob}, which shows that, with probability at least $ 1- \alpha$, the maximal CUSUM statistic process $ t \in \{ 2,3,\ldots\} \mapsto \max_{1 \leq s < t} \widehat{D}_{s,t}$ does not  exceed the curved boundary of the form $t \mapsto 2^{3/2}\sigma \sqrt{ \log (t)/\alpha}$. The proof also shows that, for any fixed $t \geq 2$, the order of  $ \max_{1 \leq s < t} \widehat{D}_{s,t}$ is no larger than $\sigma \sqrt{ \loglog \left(t/\alpha \right)}$ with probability at least $ 1 - \alpha$. Interestingly, the iterated logarithmic scaling does not appear to apply to the maximal CUSUM process, but only to the CUSUM maximum at any fixed time $t$.  

\medskip
\Cref{thm-2} shows that \Cref{alg-online} can detect the change point with probability at least $1 - \alpha$, provided that 
	\[
		\Delta \kappa^2 \sigma^{-2} \gtrsim \log(\Delta/\alpha),
	\]
	as required in Assumption~\ref{assump-2}.	In fact, this condition is essentially necessary for our procedure. More precisely, in our next result we show that if
	\begin{equation}\label{eq:cacca.snr}
		\Delta \kappa^2 \sigma^{-2} \lesssim \log(\Delta/\alpha),
	\end{equation}
		then, with probability at least $1 - \alpha$, \Cref{alg-online}  will never terminate and therefore will be unable to reliably detect a change point. That is, under the scaling \eqref{eq:cacca.snr}, our procedure only delivers type I error control but is virtually powerless. We conjecture that the scaling in Assumption~\ref{assump-2} is required by {\it any} online procedure in order to guarantee a false alarm probability smaller than $\alpha$ and at the same time, non-trivial power (which in our problem would correspond to a probability no smaller than $ 1- \alpha$ of terminating in a finite time when there exists a change point). 

\begin{proposition}\label{prop-undetectable}
Fix an $\alpha \in (0,1)$ and let Assumptions~\ref{assump-1} and \ref{assump-2} hold.  Assume in addition that for any $\alpha \in (0, 1)$, there exists a positive constant $c_{\mathrm{SNR}} < (C_b - 2^{5/2})^2$ such that
	\begin{equation}\label{eq-low-snr-rrrr}
		\Delta \kappa^2 \sigma^{-2} \leq c_{\mathrm{SNR}} \log(\Delta/\alpha). 
	\end{equation}
Let $\widehat{t}$ be the stopping time returned by \Cref{alg-online} with inputs $\{ X_t \}_{t=1,2,\ldots}$ and $\{ b_t \}_{t=2,3,\ldots}$, where
	\[
			b_{t} = C_b\sigma\log^{1/2}(t/\alpha), \quad t \geq 2, 
		\]
		with $C_b > 2^{5/2}$. Then,
		\[
\mathbb{P}_\Delta \left( \widehat{t} < \infty \right) <   \alpha. 
		\]
\end{proposition}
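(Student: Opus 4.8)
The plan is to compare the running maximum of the observed CUSUM statistics against the deterministic boundary $b_t$ by splitting each $\widehat{D}_{s,t}$ into its population (signal) component $D_{s,t}$ from Definition~\ref{def-cusum} and a mean-zero stochastic (noise) component, and then to argue that in the low signal-to-noise regime \eqref{eq-low-snr-rrrr} neither component can ever push the running maximum above $b_t$, except on an event of probability less than $\alpha$. Writing $\xi_l = X_l - f_l$, the triangle inequality gives, for every pair $1 \le s < t$,
\[
\widehat{D}_{s,t} \le D_{s,t} + \widehat{D}^{\xi}_{s,t},
\]
where $\widehat{D}^{\xi}_{s,t}$ denotes the CUSUM statistic formed from the centered, mean-zero sub-Gaussian sequence $\{\xi_l\}$ (note that in the present decomposition $\widehat{D}^{\xi}_{s,t}$ is exactly the object to which the null analysis of \Cref{alg-online} applies). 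Taking maxima over $s$, it suffices to bound $\max_{1 \le s < t} D_{s,t}$ and $\max_{1 \le s < t}\widehat{D}^{\xi}_{s,t}$ separately and uniformly in $t$.

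For the signal term, I would carry out the elementary but crucial computation of the population CUSUM under Assumption~\ref{assump-2}. Since there is a single change of size $\kappa$ at time $\Delta+1$, one finds $D_{s,t} = 0$ whenever $t \le \Delta$, while for $t > \Delta$ the map $s \mapsto D_{s,t}$ is maximized at $s = \Delta$, giving
\[
\max_{1 \le s < t} D_{s,t} = \left(\frac{\Delta(t-\Delta)}{t}\right)^{1/2}\kappa \le \sqrt{\Delta}\,\kappa.
\]
Invoking the low signal-to-noise assumption \eqref{eq-low-snr-rrrr} in the form $\sqrt{\Delta}\,\kappa \le \sqrt{c_{\mathrm{SNR}}}\,\sigma\log^{1/2}(\Delta/\alpha)$ and using the monotonicity $\log^{1/2}(\Delta/\alpha) \le \log^{1/2}(t/\alpha)$ for $t \ge \Delta$, this yields the uniform deterministic envelope $\max_{1 \le s < t} D_{s,t} \le \sqrt{c_{\mathrm{SNR}}}\,\sigma\log^{1/2}(t/\alpha)$, valid for all $t \ge 2$.

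For the noise term, I would appeal directly to Lemma~\ref{lem-alt-cumsum-prob}, applied to the centered sequence $\{\xi_l\}$, which guarantees that, with probability at least $1-\alpha$, the whole maximal process $t \mapsto \max_{1 \le s < t}\widehat{D}^{\xi}_{s,t}$ stays below the curved boundary $2^{3/2}\sigma\log^{1/2}(t/\alpha)$. On this event, combining the two bounds gives, simultaneously for all $t \ge 2$,
\[
\max_{1 \le s < t}\widehat{D}_{s,t} \le \left(\sqrt{c_{\mathrm{SNR}}} + 2^{3/2}\right)\sigma\log^{1/2}(t/\alpha) < C_b\,\sigma\log^{1/2}(t/\alpha) = b_t,
\]
where the strict inequality uses the hypothesis $c_{\mathrm{SNR}} < (C_b - 2^{5/2})^2$, equivalently $\sqrt{c_{\mathrm{SNR}}} + 2^{5/2} < C_b$, which leaves a strict positive margin of at least $(2^{5/2}-2^{3/2})\sigma\log^{1/2}(t/\alpha)$. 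Hence the stopping condition of \Cref{alg-online} is never met on this event, so $\widehat{t} = \infty$ there, and $\mathbb{P}_\Delta(\widehat{t} < \infty)$ is at most the probability of the complementary event. The margin above is precisely what upgrades the $\le\alpha$ furnished by the lemma to the strict conclusion $\mathbb{P}_\Delta(\widehat{t} < \infty) < \alpha$: terminating would force $\max_s\widehat{D}^{\xi}_{s,t}$ to exceed a boundary strictly larger than $2^{3/2}\sigma\log^{1/2}(t/\alpha)$ for some $t$.

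The only genuinely nontrivial ingredient is the uniform-in-$t$ control of the fluctuation process delivered by Lemma~\ref{lem-alt-cumsum-prob}; granted that lemma, the remainder is a deterministic bookkeeping of constants. The point requiring the most care is that the signal envelope must be matched against the curved, time-varying boundary \emph{simultaneously} for every $t$, which is what forces the use of the uniform bound $\sqrt{\Delta}\,\kappa$ for the population CUSUM together with the monotonicity of $\log^{1/2}(t/\alpha)$, rather than any bound that would degrade as $t$ grows.
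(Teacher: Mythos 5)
Your proposal is correct and follows essentially the same route as the paper's own proof: condition on the high-probability event from Lemma~\ref{lem-alt-cumsum-prob} so that the noise part of every CUSUM statistic stays below $2^{3/2}\sigma\log^{1/2}(t/\alpha)$, observe that the population CUSUM is maximized at $s=\Delta$ with value at most $\sqrt{\Delta}\,\kappa$, and then use \eqref{eq-low-snr-rrrr} together with $C_b>2^{5/2}$ to keep the sum strictly below $b_t$ for every $t$. Your treatment is in fact slightly more explicit than the paper's (which only sketches the reduction to $s=\Delta$), and the minor looseness in how the strict inequality $<\alpha$ is justified matches the paper's own level of rigor on that point.
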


\subsection{Variations of \Cref{alg-online}}\label{sec-variants}	

The framework for online change point detection we have analyzed so far  is based on controlling the false alarm probability $\alpha$, and  the magnitude of the detection delay of our procedure depends on the choice of the target probability, see \eqref{eq:thm-2.main}.  An alternative approach, considered in the literature and frequently adopted in practice, is to impose the weaker requirement that, for a target value $\gamma > 0$, $\mathbb{E}_{\infty} (\widehat{t}) 
\geq \gamma$. That is, the expected false detection time, or average run length, of an online procedure  should be no smaller than $\gamma$ when in fact there is no change point. In practical applications in which the pre-change point distribution is known or can be estimated from historical data, online procedures are often calibrated through simulations  by choosing $\gamma$ according to this criterion \citep[e.g.][]{lorden1971procedures, lai2001sequential}.

Below we show how \Cref{alg-online} can be tuned  to yield this type of false alarm control, as well as a high probability bound on the detection delay when there is in fact a change point, where the magnitude of the detection delay scales logarithmically with $\gamma$.

\begin{proposition}\label{prop-alternative-type1}
Consider the settings described in Assumption~\ref{assump-1}. Let $\gamma \geq 2$ and $\widehat{t}$ be the stopping time returned by \Cref{alg-online} with inputs $\{ X_t \}_{t=1,2,\ldots}$ and $\{ b_t \}_{t=2,3,\ldots}$, where
\begin{equation}\label{eq-bt-alter-t1}
b_{t} = 6^{1/2}\sigma\log^{1/2}\left\{2^{1/3}(\gamma+1)\right\}.
\end{equation}
If $\Delta = \infty$, then,
\[
			\mathbb{E}_{\infty}(\widehat{t}) \geq \gamma.
		\]
Under Assumptions~\ref{assump-2}, if
		\begin{equation}\label{eq-m-gamma-cond}
			\gamma \geq \Delta \quad \mbox{and} \quad  \Delta \kappa^2 \sigma^{-2} \geq C_{\mathrm{SNR}} \log(\gamma)
		\end{equation}
		where $C_{\mathrm{SNR}} > 0$ is an absolute constant, then 
		\[
			\mathbb{P}\left\{\Delta < \widehat{t} \leq \Delta + C_d\frac{\sigma^2 \log(\gamma)}{\kappa^2} \right\} \geq  1 -  \gamma^{-1},
		\]
	 where $C_d > 0$ is an absolute constant.
	  \end{proposition}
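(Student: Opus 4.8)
The plan is to exploit the fact that, unlike in \Cref{thm-2}, the threshold $b_{t}\equiv b:=6^{1/2}\sigma\log^{1/2}\{2^{1/3}(\gamma+1)\}$ prescribed in \eqref{eq-bt-alter-t1} does not depend on $t$. The single analytic engine is the sub-Gaussian tail bound for the CUSUM statistic. Write $Z_{s,t}$ for the signed CUSUM formed from the centered observations (i.e.\ \Cref{def-cusum} with $X_l$ replaced by $X_l-f_l$ and the outer absolute value removed); its coefficients have unit sum of squares, so $Z_{s,t}$ is sub-Gaussian with variance proxy $\sigma^2$ and $\mathbb{P}(|Z_{s,t}|>x)\le 2\exp\{-x^2/(2\sigma^2)\}$, the concentration inequality underlying \Cref{lem-alt-cumsum-prob}. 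The reverse triangle inequality gives $|\widehat{D}_{s,t}-D_{s,t}|\le|Z_{s,t}|$, and under $\mathbb{P}_\infty$ all means coincide so that $D_{s,t}=0$ and $\widehat{D}_{s,t}=|Z_{s,t}|$. The constant $6^{1/2}$ is calibrated precisely so that each pair contributes $\mathbb{P}_\infty(\widehat{D}_{s,t}>b)\le 2\exp\{-b^2/(2\sigma^2)\}=(\gamma+1)^{-3}$.

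For the average run length I would union bound over all admissible pairs up to time $t$ to obtain $\mathbb{P}_\infty(\widehat{t}\le t)\le\sum_{t'=2}^{t}(t'-1)(\gamma+1)^{-3}=\tfrac{t(t-1)}{2}(\gamma+1)^{-3}$. Then, writing $\mathbb{E}_\infty(\widehat{t})=\sum_{t\ge 0}\mathbb{P}_\infty(\widehat{t}>t)$ and retaining only the terms $t=0,\dots,\lfloor\gamma\rfloor$, I would substitute this bound and use $\sum_{t=2}^{\gamma}t(t-1)=\tfrac{1}{3}(\gamma+1)\gamma(\gamma-1)\le\tfrac{1}{3}(\gamma+1)^3$ to conclude $\mathbb{E}_\infty(\widehat{t})\ge(\gamma+1)-\tfrac{1}{6}\ge\gamma$. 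It is exactly the cubic decay $(\gamma+1)^{-3}$, a consequence of the factor $6^{1/2}$, that lets this partial sum close at $\gamma$; a threshold growing in $t$, as in \Cref{thm-2}, would not serve here.

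For the detection guarantee under \Cref{assump-2} and \eqref{eq-m-gamma-cond}, set $d:=C_d\sigma^2\kappa^{-2}\log(\gamma)$ and $t^\star:=\Delta+d$, and decompose the failure event as $\{\widehat{t}\le\Delta\}\cup\{\widehat{t}>t^\star\}$. The first is a pre-change false alarm: since $X_1,\dots,X_\Delta$ share one mean, $D_{s,t}=0$ for all $t\le\Delta$, and the union bound of the previous step, restricted to $t\le\Delta\le\gamma$, gives $\mathbb{P}_\Delta(\widehat{t}\le\Delta)\le\tfrac{\Delta(\Delta-1)}{2}(\gamma+1)^{-3}\le\{2(\gamma+1)\}^{-1}$. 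For the second event I would use the inclusion $\{\widehat{t}>t^\star\}\subseteq\{\widehat{D}_{\Delta,t^\star}\le b\}$, valid because a trigger of the statistic at the true change point $s=\Delta$ forces the algorithm to stop by $t^\star$. A direct computation gives $D_{\Delta,t^\star}=\{\Delta d/(\Delta+d)\}^{1/2}\kappa$, and the scaling \eqref{eq-m-gamma-cond} with $C_d<C_{\mathrm{SNR}}$ ensures $d<\Delta$, whence $D_{\Delta,t^\star}\ge(d/2)^{1/2}\kappa=(C_d/2)^{1/2}\sigma\log^{1/2}(\gamma)$. Since $\widehat{D}_{\Delta,t^\star}\ge D_{\Delta,t^\star}-|Z_{\Delta,t^\star}|$, sub-Gaussianity yields $\mathbb{P}_\Delta(\widehat{D}_{\Delta,t^\star}\le b)\le 2\exp\{-(D_{\Delta,t^\star}-b)^2/(2\sigma^2)\}$, which for $C_d$ large enough is at most $(2\gamma)^{-1}$; combining the two bounds delivers the claim $\ge 1-\gamma^{-1}$.

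The main obstacle is the simultaneous calibration of the constants. On the one hand, $C_d$ must be large enough that $(D_{\Delta,t^\star}-b)^2/(2\sigma^2)\ge\log(4\gamma)$ uniformly for $\gamma\ge 2$, which forces $(C_d/2)^{1/2}$ to dominate the threshold constant by a fixed margin even in the small-$\gamma$ regime where $b$ is not negligible relative to $\sigma\log^{1/2}(\gamma)$. On the other hand, $C_d$ must stay below $C_{\mathrm{SNR}}$ so that $d<\Delta$ and the population lower bound $D_{\Delta,t^\star}\ge(d/2)^{1/2}\kappa$ is available in the first place. Reconciling these competing demands is where the bookkeeping is most delicate; it is resolved by fixing $C_d$ large first and then taking $C_{\mathrm{SNR}}$ larger still, exactly as the qualifier ``sufficiently large absolute constant'' in \Cref{assump-snr} permits.
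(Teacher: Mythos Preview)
Your proposal is correct and follows a route that is closely related to, but organized differently from, the paper's proof. The paper argues both parts via a \emph{single} good event
\[
\mathcal{C}=\bigl\{\,|\widehat D_{s,t}-D_{s,t}|<b \text{ for all }2\le t\le \gamma+2,\ 1\le s<t\,\bigr\},
\]
obtained by a crude union over at most $(\gamma+1)^2$ pairs, so that $\mathbb{P}(\mathcal{C}^c)\le(\gamma+1)^{-1}$. For the average run length it then lower-bounds every term $\mathbb{P}_\infty(\widehat t\ge t)$, $2\le t\le\gamma+2$, by $\mathbb{P}_\infty(\widehat t>\gamma+2)\ge\mathbb{P}(\mathcal{C})$, giving $\mathbb{E}_\infty(\widehat t)\ge(\gamma+1)\{1-(\gamma+1)^{-1}\}=\gamma$. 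For the delay it works \emph{deterministically on} $\mathcal{C}$: there $\widehat D_{s,t}\ge D_{s,t}-b$ for all relevant $(s,t)$, and one repeats the Step~3--4 algebra of the proof of \Cref{thm-2} to exhibit a time $t_*=\Delta+m^*$ with $m^*\asymp\sigma^2\kappa^{-2}\log(\gamma)$ at which $\max_s\widehat D_{s,t_*}>b$.

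Your argument instead (i) tracks $\mathbb{P}_\infty(\widehat t\le t)$ time by time via the finer bound $\tfrac{t(t-1)}{2}(\gamma+1)^{-3}$ and sums, which neatly reproduces the same $\gamma$ with a $1/6$ margin, and (ii) replaces the event-based delay analysis by a \emph{single-pair} tail bound at $(s,t)=(\Delta,t^\star)$. Both routes are valid; yours is slightly more elementary and sidesteps reusing the machinery of \Cref{thm-2}, at the cost of splitting the failure probability into two pieces rather than controlling everything on one event. One small caveat: your ARL bookkeeping implicitly takes $\gamma$ to be an integer (so that the retained terms number $\gamma+1$); for non-integer $\gamma$ you should keep $t=0,\dots,\lceil\gamma\rceil$ and note the sum still closes with room to spare, or simply invoke the paper's coarser one-shot bound $\mathbb{P}_\infty(\widehat t>\gamma+2)\ge 1-(\gamma+1)^{-1}$.
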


The proof of Proposition~\ref{prop-alternative-type1} is in \Cref{sec-proofs-theorems}.  Just like with \Cref{thm-2}, it is immediate to formulate a  version of Proposition~\ref{prop-alternative-type1} that is based on stopping times. For brevity, we refrain from providing details.

It is illustrative to compare the guarantees afforded by \Cref{thm-2} and Proposition~\ref{prop-alternative-type1} when a change point exists. In this case, the signal-to-noise ratio condition defined in Assumption \ref{assump-snr} is replaced by the analogous conditions shown in \eqref{eq-m-gamma-cond}, where it is crucial that $\gamma$ is an upper bound on $\Delta$. In terms of the upper bounds on the detection delay, the difference is between $\log(\gamma)$ and $\log(\Delta/\alpha)$.  When $\gamma$ is of the same order as $\Delta$, then these two detection delay upper bounds differ by a factor $\log(1/\alpha)$, which is of constant order in the fixed confidence settings where $\alpha$ is held constant.  In terms of the values of the probability bounds, the difference is between $\gamma^{-1}$ and $\alpha$.  This further suggests that, as long as 
	\[
		\gamma \asymp \alpha^{-1},\quad \gamma \geq \Delta \quad \mbox{and} \quad \gamma \asymp \Delta,
	\]
	these two strategies are equivalent in terms of controlling the detection delay. This connection has been studied before in a slightly different form, see e.g.~\cite{lai1998information}.

In Proposition~\ref{prop-alternative-type1}, if Assumption~\ref{assump-2} holds but \eqref{eq-m-gamma-cond} does not, especially if $\gamma < \Delta$, then we can only reach the conclusion that $\mathbb{E}_{\Delta}(\widehat{t}) \geq \gamma$, but no longer guarantee the control on the type I error or on the detection delay.  This is because the values of the threshold $b_t$, defined in \eqref{eq-bt-alter-t1}, is constant with respect to $t$ and only depends on $\gamma$.

In another variant of \Cref{alg-online} that is often used in practice, researchers may only wish to control the false alarm probability over any interval of a pre-specified length.  This strategy may be preferred for computational reasons,  as the computational cost of the procedure can be directly controlled by selecting an appropriate interval length, especially when training can easily be done with historical data.  Below, we provide a parallel result to \Cref{thm-2}. To that effect, we require a new definition of the CUSUM statistic in \Cref{def-cusum-multi} and  slightly modify \Cref{alg-online}, resulting in \Cref{alg-online-alter}, whose computational cost is of order $O(t)$, when proceeding to time point~$t$.  

\begin{definition}\label{def-cusum-multi}
Given data $\{X_t\}_{t = 1, 2, \ldots} \subset \mathbb{R}$ and a triplet of integers $1 \leq e < s < t$, we define the CUSUM statistic and its population counterpart as	
	\[
		\widehat{D}_{e, s, t} = \left|\left\{\frac{t-s}{(s-e)(t-e)}\right\}^{1/2}\sum_{l = e+1}^s X_l - \left\{\frac{s-e}{(t-s)(t-e)}\right\}^{1/2}\sum_{l = s+1}^t X_l\right|
	\]
	and
	\[
	D_{e, s, t} = \left|\left\{\frac{t-s}{(s-e)(t-e)}\right\}^{1/2}\sum_{l = e+1}^s f_l - \sqrt{\frac{s-e}{(t-s)(t-e)}}\sum_{l = s+1}^t f_l\right|,
	\]
	respectively. 
\end{definition}

\begin{algorithm}[!ht]
	\begin{algorithmic}
		\INPUT $\{X_u\}_{u=1, 2, \ldots} \subset \mathbb{R}$, $\gamma \geq 2$, $b_{\gamma} > 0$.
		\State $t \leftarrow 1$
		\While{$\mathrm{FLAG} = 0$}  
			\State{$t \leftarrow t + 1$;}
			\State{$\mathrm{FLAG} = 1 - \prod_{s = \max\{t-\gamma+1, \, 1\}}^{t-1} 1 \left\{\widehat{D}_{\max\{t-\gamma, \, 0\}, s, t} \leq b_{\gamma}\right\}$;}
		\EndWhile
		\OUTPUT $t$.
		\caption{Online change point detection 2.} \label{alg-online-alter}
	\end{algorithmic}
\end{algorithm} 

\begin{proposition}\label{prop-alter-2}
Consider the settings described in Assumption~\ref{assump-1}. Let $\alpha \in (0, 1)$ and $\widehat{t}$ be the stopping time returned by \Cref{alg-online-alter} with inputs $\{ X_t \}_{t=1,2,\ldots}$, $\gamma \geq 2$ and $b_{\gamma} = 2^{1/2}\sigma\log^{1/2} \left(2\gamma^2/\alpha\right)$.  Then, when $\Delta = \infty$,
\[
			\sup_{v \geq 1} \mathbb{P}_{\infty}\left(v < \widehat{t} \leq v + \gamma \right) < \alpha.
		\]
If Assumptions~\ref{assump-2} and \ref{assump-snr} hold and 
		\begin{equation}\label{eq-alter-gamma-cond}
			\gamma > C_{\gamma}\sigma^2 \log(2\gamma^2/\alpha) \kappa^{-2} ,		
		\end{equation}
		with an absolute constant $C_{\gamma} > 0$ then with an absolute constant $C_d > 0$,
\[
			\mathbb{P}\left\{ (\widehat{t} - \Delta)_+\geq C_d\sigma^2 \log(2\gamma^2/\alpha) \kappa^{-2} \right\} < \alpha.
		\]
\end{proposition}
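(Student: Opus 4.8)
The plan is to handle the two regimes of Proposition \ref{prop-alter-2}---no change point ($\Delta = \infty$) and a single change point---separately, in both cases exploiting that each statistic $\widehat{D}_{e,s,t}$ of \Cref{alg-online-alter} is, after subtracting its population version $D_{e,s,t}$, the absolute value of a normalized weighted sum of the independent sub-Gaussian noises $X_l - f_l$. The preliminary and common step is to record the tail of a single statistic: writing the centred statistic as $|\sum_{l = e+1}^{t} a_{l}(X_l - f_l)|$ with the signed CUSUM weights $a_l$, a direct computation gives $\sum_{l = e+1}^{t} a_l^2 = 1$, so the centred statistic is the modulus of a mean-zero sub-Gaussian random variable with variance proxy $\sigma^2$ and therefore obeys $\mathbb{P}\{|\sum_l a_l(X_l - f_l)| > x\} \leq 2\exp\{-x^2/(2\sigma^2)\}$. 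I would isolate this as the workhorse tail bound; under the null the population term vanishes and the bound applies to $\widehat{D}_{e,s,t}$ directly.

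For the type I error statement I fix $v \geq 1$ and bound $\mathbb{P}_\infty(v < \widehat{t} \leq v + \gamma)$ by the probability that some statistic active during the window $(v, v+\gamma]$ exceeds $b_\gamma$. At each $t \in (v, v+\gamma]$ there are at most $\gamma - 1$ admissible split points $s$, while the left endpoint $e = \max\{t-\gamma,0\}$ is determined by $t$; hence at most $\gamma(\gamma-1) < \gamma^2$ statistics are involved. A union bound with the calibration $b_\gamma = 2^{1/2}\sigma\log^{1/2}(2\gamma^2/\alpha)$, for which $2\exp\{-b_\gamma^2/(2\sigma^2)\} = \alpha/\gamma^2$, yields $\mathbb{P}_\infty(v < \widehat{t} \leq v + \gamma) \leq \gamma(\gamma-1)\,\alpha/\gamma^2 < \alpha$, uniformly in $v$.

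For the detection delay the strategy is to exhibit a single well-chosen triple that forces a quick alarm, so that no union bound is needed. I would set $r \asymp \sigma^2\kappa^{-2}\log(2\gamma^2/\alpha)$, $t^* = \Delta + r$, $s = \Delta$ and $e = \max\{t^* - \gamma, 0\}$, and check that these indices are admissible at time $t^*$. Writing $a = \Delta - e$ and $b = r$ for the numbers of pre- and post-change observations captured by the window, the single-change structure of Assumption~\ref{assump-2} gives the closed form $D_{e,\Delta,t^*} = \{ab/(a+b)\}^{1/2}\kappa$. Using $ab/(a+b) \geq \min(a,b)/2$ and the window condition \eqref{eq-alter-gamma-cond} to ensure $b = r \leq \gamma/2$, I would lower bound $D_{e,\Delta,t^*} \geq 2 b_\gamma$ by choosing the absolute constant in $r$ large enough. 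The workhorse tail bound applied to this one triple then shows that, with probability at least $1 - \alpha/\gamma^2 > 1 - \alpha$, the noise contribution stays below $b_\gamma$, whence $\widehat{D}_{e,\Delta,t^*} \geq D_{e,\Delta,t^*} - b_\gamma \geq b_\gamma$; on this event the algorithm has stopped by $t^* < \Delta + C_d\sigma^2\kappa^{-2}\log(2\gamma^2/\alpha)$, and taking complements gives the claimed bound on $(\widehat{t} - \Delta)_+$.

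The step I expect to be the main obstacle is controlling $a = \min\{\Delta, \gamma - r\}$, the number of pre-change samples retained by the sliding window: the bound $D_{e,\Delta,t^*} \geq 2b_\gamma$ forces $a$ to be of order at least $\sigma^2\kappa^{-2}\log(2\gamma^2/\alpha)$. When $\Delta \geq \gamma$ the window is saturated with pre-change data, $a \geq \gamma/2$, and the requirement follows directly from \eqref{eq-alter-gamma-cond}; the delicate case is $\Delta < \gamma$, where $a = \Delta$ and one must use Assumption~\ref{assump-snr} to certify that $\Delta$ is itself large enough. Reconciling the $\log(\Delta/\alpha)$ scaling of Assumption~\ref{assump-snr} with the $\log(2\gamma^2/\alpha)$ scaling hard-wired into $b_\gamma$ is the point that requires the most care, and is where I would concentrate the effort.
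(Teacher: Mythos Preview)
Your proposal follows essentially the same route as the paper's own proof. For the type I error, the paper does exactly what you describe: a union bound over at most $\gamma^2$ triples $(e,s,t)$ active in the window, combined with the sub-Gaussian tail bound $\mathbb{P}\{|W| > b_\gamma\} \leq 2\exp\{-b_\gamma^2/(2\sigma^2)\} = \alpha/\gamma^2$. For the delay, the paper likewise fixes $s=\Delta$, writes $D_{t-\gamma,\Delta,t} = \{(t-\Delta)(\gamma - (t-\Delta))/\gamma\}^{1/2}\kappa$, and looks for the smallest $m = t-\Delta$ with $\{m(\gamma-m)/\gamma\}^{1/2}\kappa \geq 2b_\gamma$, using \eqref{eq-alter-gamma-cond} to ensure $(\gamma-m)/\gamma \geq 1/2$. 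The only cosmetic difference is that the paper controls the noise simultaneously over all $t \in (\Delta,\Delta+\gamma/2)$ via a small union bound (event $\mathcal{G}$ with probability at most $(\gamma/2)\cdot \alpha/\gamma^2$), whereas you use a single $t^*$; either works.

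Regarding the obstacle you single out: the paper's proof does not address it either. Throughout the delay argument it writes the left endpoint as $t-\gamma$ rather than $\max\{t-\gamma,0\}$, so it is implicitly working in the regime $t \geq \gamma$, equivalently $\Delta \geq \gamma - m$. When $\Delta$ is small relative to $\gamma$ the population formula $\{m(\gamma-m)/\gamma\}^{1/2}\kappa$ over-states the actual signal $\{m\Delta/(m+\Delta)\}^{1/2}\kappa$, and your concern about reconciling the $\log(\Delta/\alpha)$ scale of Assumption~\ref{assump-snr} with the $\log(2\gamma^2/\alpha)$ scale baked into $b_\gamma$ is exactly the missing ingredient. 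So you have correctly located the gap; it is shared by the paper's presentation rather than being a divergence from it.
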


The proof of Proposition~\ref{prop-alter-2} can be found in \Cref{sec-proofs-theorems}.  The strategy used in proving Proposition~\ref{prop-alter-2} can be seen as a mixture of the strategies used in \Cref{thm-2} and Proposition~\ref{prop-alternative-type1}: instead of controlling the  type I error over the whole time course, it is enough to only control the type I error over intervals of length $\gamma$. Comparing with Proposition~\ref{prop-alternative-type1}, the advantage here is that, if $\gamma < \Delta$, we will be able to provide a high-probability bound on the detection delay.  As for \Cref{thm-2}, the advantage here is the same as that in Proposition~\ref{prop-alternative-type1}, that it might be handier in tuning parameter selection, while the price it pays is here that,  when $\gamma < \Delta$, then there is no guarantee that $\widehat{t} > \Delta$, i.e.~preventing false alarms. 

\subsection{Practical and computational issues}\label{sec-practical}

In this section we are to discuss two practical issues for the CUSUM-statistics-based online procedures discussed above. 

The first issue is the choice of tuning parameters.  In \Cref{alg-online}, we need a sequence of tuning parameters $\{b_t\}$, whose theoretically-justified value is $2^{3/2}\sigma\log^{1/2}(t/\alpha)$, see  \eqref{eq-b-2}. 
The quantity $t$ and $\alpha$ can be determined by the user, but the sub-Gaussian parameter $\sigma$ remains unknown.   In some situations, the practitioner  has access to independent copies of data generated from a model with no change points.  Then, one  may wish to set a limit of time, say $T$, and estimate the empirical type I errors over the time course $\{1, \ldots, T\}$, in order to tune the thresholds.    In this sense, the first variant we mentioned in \Cref{sec-variants} is handier.  The tuning parameter can be chosen by setting the average run length equal to a pre-specified $\gamma$.

The second  practical issue is computational complexity.  The CUSUM statistic given in \Cref{def-cusum} can be rewritten as
	\[
		\widehat{D}_{s, t} = \left\{\frac{t}{s(t-s)}\right\}^{1/2}\left|\frac{s}{t} \sum_{i = 1}^t X_i - \sum_{i = 1}^s X_i\right|.
	\]
	Using this equivalent expression, for each time $t$, one can store all partial sums $\{\sum_{i = 1}^s X_i\}_{s = 1}^t$ and the computational cost of \Cref{alg-online} is therefore of order $O(t)$ but the storage is also of order $O(t)$.  As an alternative, one may elect to recalculate the CUSUM statistic every time, in which case there is no requirement on storage but the computational cost increases to be of order $O(t^2)$.
	
	To reduce the computational burden, a very simple strategy is to avoid calculating the maximal values of the CUSUM statistics at all integer pairs $1 \leq s < t$ and instead consider only pairs that are at a certain distance in time, say $h>0$, a pre-specified parameter picked by the user. The window width $h$ can be regarded as the user's maximal  tolerance on accuracy. More precisely, at each time  $t \geq 2$, instead of maximizing the values of the CUSUM statistics over all integers $s \in [1, t)$, one could just calculate $\widehat{D}_{t - h, t}$, for 
	\[
		h \asymp \frac{\sigma^2}{\kappa^2 \log(\Delta)} \quad \mbox{and} \quad t > h.
	\]
		Then, in \Cref{alg-online}, it would only be enough to check if $\widehat{D}_{t - h, t}$ exceeds an appropriate threshold.  The computational complexity of this alternative is of order $O(t)$ and the storage cost is of order $O(1)$.  The caveat of this alternative is that one needs to carefully tune $h$, which is required to essentially have the same order of the magnitude as the detection delay.  To address this issue, we propose  to compute $\widehat{D}_{t - h, t}$ only over a geometrically increasing sequence of values for $h$, a tuning strategy that will only incur in an additional computational cost of order $O\{\log(t)\}$ when proceeding to time point  $t$. The detailed procedure is given in \Cref{alg-online-net-3}. As it turns out, this strategy is not only computationally convenient, but it yields the same nearly optimal theoretical guarantees of \Cref{thm-2}, as shown next.

\begin{algorithm}[!ht]
	\begin{algorithmic}
		\INPUT $\{X_u\}_{u=1, 2, \ldots} \subset \mathbb{R}$, $\{b_u, u = 1, 2, \ldots,\} \subset \mathbb{R}$.
		\State $t \leftarrow 1$
		\While{$\mathrm{FLAG} = 0$}  
			\State{$t \leftarrow t + 1$;}
			\State{$J \leftarrow \lfloor \log(t)/\log(2)\rfloor$;}
			\State{$j \leftarrow 0$;}
			\While{$j < J$ and $\mathrm{FLAG} = 0$}
				\State{$j \leftarrow j + 1$;}
				\State{$s_j \leftarrow t-2^{j-1}$;}
				\State{$\mathrm{FLAG} = 1\left\{\widehat{D}_{s_j, t} > b_{t}\right\}$;}
			\EndWhile
		\EndWhile
		\OUTPUT $t$.
		\caption{Online change point detection - varying $h$.} \label{alg-online-net-3}
	\end{algorithmic}
\end{algorithm}

\begin{corollary} \label{prop-new}
Consider the settings described in Assumption~\ref{assump-1}. Let $\alpha \in (0, 1)$ and $\widehat{t}$ be the stopping time returned by \Cref{alg-online-net-3} with inputs $\{ X_t \}_{t=1,2,\ldots}$ and $\{ b_t \}_{t=2,3,\ldots}$, where
	\[
			b_{t} = 2^{3/2}\sigma\log^{1/2}(t/\alpha), \quad t \geq 2.
		\]
		 If $\Delta = \infty$, then
		\[
		\mathbb{P}_{\infty} \left\{ \widehat{t} < \infty \right\} < \alpha.
		\]
		Under Assumption \ref{assump-2}, we have
		\[
		\mathbb{P}_{\Delta} \left\{ \widehat{t} \leq \Delta \right\} < \alpha,
		\]
		for any $\Delta \geq 1$. 
		If Assumptions \ref{assump-2} and \ref{assump-snr} both hold, then 
\[
		\mathbb{P}\left\{\Delta <  \widehat{t} \leq  \Delta + C_d\frac{\sigma^2 \log(\Delta/\alpha)}{\kappa^2}\right\} \geq 1 - \alpha,
	\]
	for all $\Delta \geq 1$, where $C_d > 0$ is an absolute constant satisfying $C_d < C_{\mathrm{SNR}}$.
\end{corollary}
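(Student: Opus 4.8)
The plan is to derive the three conclusions from two largely independent ingredients: a monotonicity comparison with \Cref{alg-online} for the two false-alarm statements, and a direct dyadic-approximation argument for the detection-delay bound. The starting point is the elementary observation that, at every time $t$, \Cref{alg-online-net-3} evaluates the CUSUM statistic only at the splits $s_j = t - 2^{j-1}$, $j=1,\ldots,J$, which form a subset of $\{1,\ldots,t-1\}$, the set of all splits scanned by \Cref{alg-online} under the \emph{same} thresholds $b_t = 2^{3/2}\sigma\log^{1/2}(t/\alpha)$. Hence the flagging condition of \Cref{alg-online-net-3} at any $t$ implies that of \Cref{alg-online}, so the two stopping times obey $\widehat t_{\mathrm{full}} \le \widehat t_{\mathrm{net}}$ almost surely. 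This yields the inclusions $\{\widehat t_{\mathrm{net}} < \infty\} \subseteq \{\widehat t_{\mathrm{full}} < \infty\}$ and $\{\widehat t_{\mathrm{net}} \le \Delta\} \subseteq \{\widehat t_{\mathrm{full}} \le \Delta\}$, so the first two displays of the corollary follow immediately from \eqref{eq:thm-2.delta.infty} and \eqref{eq:thm-2.any.delta}.

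For the detection-delay bound I would condition on the high-probability event $\mathcal A$ of Lemma~\ref{lem-alt-cumsum-prob}, on which the CUSUM fluctuations are uniformly controlled. Writing $\widetilde D_{s,t}$ for the CUSUM of the centred noise $X_l - f_l$, the reverse triangle inequality gives $|\widehat D_{s,t} - D_{s,t}| \le \widetilde D_{s,t} \le b_t$ for all admissible $(s,t)$ on $\mathcal A$, with $\mathbb P(\mathcal A) \ge 1-\alpha$. Since the observations up to any $t \le \Delta$ share a common mean, $D_{s_j,t} = 0$ there, whence $\widehat D_{s_j,t} = \widetilde D_{s_j,t} \le b_t$, no flag is raised, and $\widehat t > \Delta$ on $\mathcal A$. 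It then remains to force a flag by time $t^\star = \Delta + d$, where $d = C_d\,\sigma^2\kappa^{-2}\log(\Delta/\alpha)$.

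The crux is to exhibit a single dyadic split at $t^\star$ already exceeding the threshold. From \Cref{def-cusum} one computes $D_{s,t} = \kappa(t-\Delta)\sqrt{s/\{t(t-s)\}}$ for $s \le \Delta < t$. I would choose $j$ with $2^{j-1} \in [d,2d]$, which is always possible since every such interval contains a power of two; then $s_j = t^\star - 2^{j-1} \in [\Delta - d,\Delta]$, so $s_j \le \Delta$ and the clean formula applies. Under Assumption~\ref{assump-snr} one has $d < \Delta$ (as $C_d < C_{\mathrm{SNR}}$), hence $t^\star < 2\Delta$, $s_j \ge \Delta/2$ and $t^\star - s_j \le 2d$, which give $D_{s_j,t^\star} \ge \kappa\sqrt d/(2\sqrt2) = \sqrt{C_d}\,\sigma\log^{1/2}(\Delta/\alpha)/(2\sqrt2)$, while $b_{t^\star} \le 4\sigma\log^{1/2}(\Delta/\alpha)$. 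Choosing $C_d$ a sufficiently large absolute constant (still below $C_{\mathrm{SNR}}$) makes $D_{s_j,t^\star} > 2 b_{t^\star}$, so that on $\mathcal A$, $\widehat D_{s_j,t^\star} \ge D_{s_j,t^\star} - b_{t^\star} > b_{t^\star}$ and \Cref{alg-online-net-3} flags at $t^\star$, i.e.\ $\widehat t \le \Delta + d$. Combined with $\widehat t > \Delta$, this yields the claim with probability at least $\mathbb P(\mathcal A) \ge 1-\alpha$.

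I expect the main obstacle to be the bookkeeping certifying that the chosen dyadic index is legal, namely that $1 \le j \le J = \lfloor \log_2 t^\star\rfloor$ and $s_j \ge 1$, and that the constant-factor loss from replacing the optimal split $s = \Delta$ by the dyadic $s_j$ is absorbable into $C_d$ without violating $C_d < C_{\mathrm{SNR}}$. Verifying $j \le J$ is precisely where Assumption~\ref{assump-snr} enters quantitatively: the signal-to-noise gap $C_{\mathrm{SNR}}/C_d$ must be large enough that $\log_2 d$ stays comfortably below $\log_2 t^\star$. Everything else reduces to routine estimates on the signal and the threshold.
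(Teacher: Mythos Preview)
Your argument is correct and follows a somewhat different route from the paper's. For the two false-alarm statements you use the monotone comparison $\widehat t_{\mathrm{full}}\le \widehat t_{\mathrm{net}}$ and invoke \eqref{eq:thm-2.delta.infty}--\eqref{eq:thm-2.any.delta} directly; the paper instead re-derives these by working on the concentration event of Lemma~\ref{lem-alt-cumsum-prob}, which is equivalent but longer. For the detection delay the two proofs diverge more substantially. The paper restricts to dyadic splits with $s\ge\Delta$, uses $D_{s,t}=\Delta\kappa\{(t-s)/(ts)\}^{1/2}$, and argues that minimising $s(m+\Delta)/(m+\Delta-s)$ over the dyadic grid in $[\Delta,m+\Delta)$ reproduces, up to a constant, the minimisation at $s=\Delta$ already carried out in the proof of \Cref{thm-2}. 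You instead exhibit a single dyadic split $s_j\le\Delta$ with $t^\star-s_j\in[d,2d]$, apply the complementary formula $D_{s,t}=\kappa(t-\Delta)\{s/(t(t-s))\}^{1/2}$, and lower-bound the signal directly. Both approaches lose only an absolute constant to the dyadic approximation; yours is arguably more transparent because it names an explicit flagging split rather than optimising over a grid. The bookkeeping you flag---that the chosen $j$ satisfies $j\le J$ and $s_j\ge 1$---is real but harmless: it amounts to $d\le c\Delta$ for a fixed numerical $c<1$ (e.g.\ $c=1/7$ suffices), which in turn only requires $C_d\le cC_{\mathrm{SNR}}$, compatible with the statement since $C_{\mathrm{SNR}}$ is taken ``sufficiently large'' and $C_d$ is merely some absolute constant below it. One small slip: you need $s_j\ge\Delta/2$, which follows from $d\le\Delta/2$ rather than from $d<\Delta$ alone; this is again absorbed by the same constraint on $C_d/C_{\mathrm{SNR}}$.
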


\section{Multiple change points}\label{sec-multi}

It is relatively straightforward to extend our methodology and analysis to deal with multiple change points. In this setting, the analyst still collects data sequentially and, with each new data point, makes a decision as to whether there exists evidence supporting the presence of a change point in the near past. But, unlike the single change point case considered in the previous sections, the procedure is restarted each time a new change point is declared, until the experiment is terminated.  In order to accommodate for the presence of multiple change points, we need a refined setup.

\begin{assumption}\label{assump-multi}
Assume that there exists a collection of change points $ 1 = \eta_0  <  \eta_1  <  \eta_2 <  \ldots$, such that
	\[
		f_{\eta_{k-1}} = \cdots = f_{\eta_k - 1} = \mu_k, \quad k = 1, 2, \ldots,
	\]
	where $\mu_k \neq \mu_{k+1}$ for each $k=1,2,\ldots$.  Let
	\[
		\Delta = \inf_{k = 1, 2, \ldots} (\eta_k - \eta_{k-1})
	\]	
	and, for each $k=1,2,\ldots$,
	\[
		\kappa_k  = |\mu_k - \mu_{k+1}|.  
	\]
	Finally, set $\kappa = \inf_{k = 1, 2, \ldots} \kappa_k$, assumed to be strictly positive.
\end{assumption}

In Assumption~\ref{assump-multi}, the minimal spacing between two consecutive change points $\Delta$ and the minimal jump size $\kappa$ are natural generalizations of the analogous quantities defined in Assumption~\ref{assump-2}  and indeed coincide with them when there is only one change point.

\Cref{alg-online-multi} is an immediate generalization of \Cref{alg-online} and amounts to repeatedly applying \Cref{alg-online} as soon as a new change point is found. Below we will demonstrate that, under the signal-to-noise ratio condition described in Assumption~\ref{assump-snr-2}, given a pre-specified confidence level $\alpha \in (0,1)$, the input parameters $\{b_u, u = 2, 3, \ldots\}$ of the algorithm can be chosen so that, with probability at least $1- \alpha$, 
\begin{itemize}
\item the procedure will not declare any change point if in fact there is none;
\item if there are change points, then, for any stopping time $T$ with respect to the natural filtration induced by the data, the procedure will not return any change point if $T \leq \eta_1$ and will identify the sequence $\{\eta_1,\ldots,\eta_K\} =  \{ 1, \ldots, T\} \cap \{ \eta_k \}_{k=1,2,\ldots}$ of true change points prior to~$T$. Furthermore, in the former case, the detection delays will be of order $O\left\{\sigma^2\kappa_j^{-2} \log (\Delta/\alpha) \right\}$, $j=1,\ldots K$. It is worth noting that the number $K$ will in general be random if the stopping time $T$ is not deterministic.
\end{itemize}

\begin{algorithm}[!ht]
	\begin{algorithmic}
		\INPUT $\{X_u\}_{u=1, 2, \ldots} \subset \mathbb{R}$, $\{b_u, u = 2, 3, \ldots\} \subset \mathbb{R}$, $\mathcal{C} = \emptyset$.
		\State $\mathrm{FLAG} = 0$
		\State $e \leftarrow 0$
		\State $t \leftarrow 1$
		\While{there is a new data point}  
			\State{$t \leftarrow t + 1$;}
			\State{$\mathrm{FLAG} = 1 - \prod_{s = 1}^{t-1} \mathbbm{1} \left\{\widehat{D}_{e, s, t} \leq b_t\right\}$;}
			\If{$\mathrm{FLAG} = 1$}
				\State $\mathcal{C} \leftarrow \mathcal{C} \cup \{t\}$;
				\State $\mathrm{FLAG} \leftarrow 0$;
				\State $e \leftarrow t$
			\EndIf
		\EndWhile
		\OUTPUT $\mathcal{C}$.
		\caption{Online change point detection - multiple change points.} \label{alg-online-multi}
	\end{algorithmic}
\end{algorithm}

The signal-to-noise condition that we required for the case of multiple change points is as follows.

\begin{assumption}\label{assump-snr-2}
Assume that for any $\alpha \in (0, 1)$, there exists a sufficiently large absolute constant $C_{\mathrm{SNR}} > 0$ such that
	\[
		\Delta \kappa^2 \sigma^{-2} \geq C_{\mathrm{SNR}} \log(\Delta/\alpha).
	\]	
\end{assumption}

Assumptions~\ref{assump-snr} and \ref{assump-snr-2} are identical in appearance, though, of course, the meaning of the quantities $\Delta$ and $\kappa$ are different depending on the settings.
In the main result of this section we generalize the findings in \eqref{eq:thm-2.main} to the settings involving multiple change points, deriving essentially the same false alarm and detection delay bounds as in the only one change point case. In order to allow for a more transparent statement, we have expressed our results in terms of a uniform choice of stopping times for terminating the experiment.

\begin{theorem}\label{thm-multi}
Consider the settings described in Assumption~\ref{assump-1}. Let $\alpha \in (0, 1)$ and $\mathcal{C}$ be the output of \Cref{alg-online-multi} with inputs $\{X_t\}_{t = 1, 2, \ldots}$ and $\{b_u, u = 2, 3, \ldots\}$, where
\begin{equation}\label{eq:C4}
			b_u = 4\sigma\log^{1/2}(t/\alpha), \quad \forall u = 2,3,\ldots.
		\end{equation}

The following holds with probability at least $ 1- \alpha$, uniformly over all choices of finite stopping times $T$ with respect to the natural filtration generated by  $\{X_t\}_{t = 1, 2, \ldots}$.

\begin{itemize}
\item If the random variables $\{X_t\}_{t = 1, 2, \ldots}$ have constant means, then $\mathcal{C} = \emptyset.$

\item Under Assumptions \ref{assump-multi} and \ref{assump-snr-2}, let $K = |\{1, \ldots, T\} \cap \{\eta_k, k = 1, 2, \ldots\}|$. 
If $K=0$, then 
\[
\mathcal{C} \cap \{1,\ldots,T\} = \emptyset.
\] 
If $K \geq 1 $, there exists an absolute constant $C_d > 0$ satisfying $C_d < C_{\mathrm{SNR}}$ such that
\[
\mathcal{C} = \begin{cases}
	\{ \hat{t}_1, \ldots, \hat{t}_{K-1}\} \text{ or } \{ \hat{t}_1, \ldots, \hat{t}_{K}\} & \text{if } 0 \leq T - \eta_K < C _d\frac{\sigma^2 \log(\Delta/\alpha)}{\kappa_k^2},\\ 	
	\{ \hat{t}_1, \ldots, \hat{t}_{K}\}  & \text{otherwise,}
 \end{cases}
\]
where we set $\{ \hat{t}_1, \ldots, \hat{t}_{K-1}\} = \emptyset$ when $K=1$. 
 Furthermore,
\[
 0 \leq \hat{t}_k - \eta_k \leq C_d\frac{\sigma^2 \log(\Delta/\alpha)}{\kappa_k^2}, \quad \forall k = 1,\ldots, K,
\]
where, if $\mathcal{C} = \{ \hat{t}_1, \ldots, \hat{t}_{K-1}\}$, we define $\hat{t}_K = T$.
	\end{itemize}
\end{theorem}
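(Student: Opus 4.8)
The plan is to reduce the multiple change point analysis to repeated applications of the single change point result (\Cref{thm-2}), with all the randomness absorbed into one trajectory-wide good event so that the conclusion holds simultaneously for every stopping time $T$. Concretely, I would first define the event
\[
\mathcal{A} = \left\{ \widehat{D}^{0}_{e,s,t} \leq b_t \ \text{ for all integers } 1 \leq e < s < t \right\},
\]
where $\widehat{D}^{0}_{e,s,t}$ denotes the three-index CUSUM of \Cref{def-cusum-multi} evaluated on the centred data $\{X_i - f_i\}$. Since the weights defining $\widehat{D}^{0}_{e,s,t}$ have unit $\ell_2$-norm, each $\widehat{D}^{0}_{e,s,t}$ is the absolute value of a mean-zero sub-Gaussian variable with parameter $\sigma$; a sub-Gaussian tail bound combined with a union bound over the at most $t^2$ pairs $(e,s)$ at each $t$ shows that the choice $b_t = 4\sigma\log^{1/2}(t/\alpha)$ in \eqref{eq:C4} makes $\sum_{t}\sum_{e<s<t}\mathbb{P}(\widehat{D}^0_{e,s,t} > b_t)$ summable and bounded by $\alpha$, whence $\mathbb{P}(\mathcal{A}) \geq 1-\alpha$. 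The merit of quantifying over all triples simultaneously is that it controls the CUSUM for every possible data-dependent reset point $e$ that \Cref{alg-online-multi} may select, which is exactly the new feature relative to the single change point setting.

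Working deterministically on $\mathcal{A}$, I would use the reverse triangle inequality $|\widehat{D}_{e,s,t} - D_{e,s,t}| \leq \widehat{D}^0_{e,s,t} \leq b_t$, which converts the algorithm's behaviour into statements about the population CUSUM $D_{e,s,t}$. Two consequences drive everything. First, if the segment $(e,t]$ has constant mean then $D_{e,s,t} = 0$ for all $s$, so $\widehat{D}_{e,s,t} \leq b_t$ and no alarm is raised; this immediately yields $\mathcal{C} = \emptyset$ under constant means, and $\mathcal{C} \cap \{1,\ldots,T\} = \emptyset$ when $K = 0$. Second, whenever the current segment $(e,t]$ contains exactly one change point $\eta_k$, detection is guaranteed as soon as $\max_{s} D_{e,s,t} > 2 b_t$, because then $\widehat{D}_{e,s,t} \geq D_{e,s,t} - b_t > b_t$. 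Evaluating the population CUSUM at $s = \eta_k - 1$ gives $D_{e,\eta_k-1,t} = \kappa_k \sqrt{n_1 n_2/(n_1+n_2)}$ with $n_1 = \eta_k - 1 - e$ and $n_2 = t - \eta_k + 1$; this is the same offline detectability computation underlying \Cref{thm-2}, and it shows that detection occurs once $n_2 \gtrsim \sigma^2 \log(t/\alpha)/\kappa_k^2$, i.e.\ with delay of the stated order.

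The core of the argument is then an induction on $k$ showing that the reset points stay ``clean'', i.e.\ that each detection time satisfies $\eta_k \leq \hat t_k < \eta_{k+1}$, so that after the reset $e = \hat t_k$ the next segment again contains a single change point. The no-alarm consequence above gives $\hat t_k \geq \eta_k$ (no false alarm before the change in the current segment), and the detectability consequence gives the upper bound $\hat t_k - \eta_k \leq C_d \sigma^2 \log(\cdot/\alpha)/\kappa_k^2$. To close the induction I must ensure this delay is strictly smaller than the spacing $\Delta$, and also that the surviving pre-change length $n_1 = \eta_k - 1 - \hat t_{k-1}$ remains a constant fraction of $\Delta$, so that $n_1 n_2/(n_1+n_2) \gtrsim n_2$ in the detectability bound. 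Both follow from \Cref{assump-snr-2} together with the strict inequality $C_d < C_{\mathrm{SNR}}$: the SNR condition forces $C_d \sigma^2 \log(\Delta/\alpha)/\kappa^2 < \Delta$, so each delay is a fraction of $\Delta$ and the reset lands before $\eta_{k+1}$. Since $\mathcal{A}$ does not involve $T$, the entire sequence $\hat t_1 < \hat t_2 < \cdots$ is determined on $\mathcal{A}$ independently of $T$, and reading off $\mathcal{C} \cap \{1,\ldots,T\}$ gives the claimed dichotomy: all of $\hat t_1,\ldots,\hat t_K$ are reported unless $T$ falls inside the last delay window $[\eta_K, \hat t_K)$, in which case $\hat t_K$ has not yet been recorded --- this is precisely the ``or'' in the statement, with the convention $\hat t_K = T$.

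The step I expect to be the main obstacle is the inductive maintenance of the clean-segment structure under a genuinely data-dependent reset point: unlike in \Cref{thm-2}, the interval on which the next CUSUM is computed begins at the random time $\hat t_{k-1}$, so one must control the CUSUM uniformly over all admissible starting points (handled by $\mathcal{A}$) and, more delicately, propagate the guarantee $\hat t_{k-1} \in [\eta_{k-1}, \eta_k)$ forward without accumulating error across change points. A related technical point to watch is that the detectability bound naturally produces $\log(\hat t_k/\alpha)$ rather than $\log(\Delta/\alpha)$, since the threshold $b_t$ grows with the absolute time $t$; reconciling this with the stated $\log(\Delta/\alpha)$ requires using the scaling to bound the relevant $t$ in terms of $\Delta$ while tracking the slack afforded by $C_d < C_{\mathrm{SNR}}$, which is where the constants must be handled with care.
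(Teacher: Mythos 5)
Your proposal is correct and follows essentially the same route as the paper: a single trajectory-wide good event controlling the three-index CUSUM uniformly over all triples $(e,s,t)$ (the paper's Lemma~\ref{lem-concen-multi}, which uses a peeling argument where you use a plain union bound --- both work with the constant $4$), followed by the no-false-alarm and detection-delay analysis on each clean segment and an induction showing each reset $\hat t_k$ lands within $\Delta/4$ of $\eta_k$, so the next segment retains at least $3\Delta/4$ pre-change observations. The $\log(\hat t_k/\alpha)$-versus-$\log(\Delta/\alpha)$ subtlety you flag at the end is a genuine one, and the paper's own proof does not resolve it either --- it simply invokes ``identical arguments'' to \Cref{thm-2}, whose bound of the current time $t$ by $2\Delta$ does not automatically carry over to later change points where $t \approx \eta_k \gg \Delta$.
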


The proof of \Cref{thm-multi} is in \Cref{sec-proofs-theorems}.  The proof of \Cref{thm-multi} repeatedly uses the proof and results of \Cref{thm-2}, therefore the proof of \Cref{thm-multi} holds for any fixed time point $T$, including random finite stopping times $T$.  In fact, Lemma 3 in \cite{howard2018uniform} also shown such equivalence in the context of sequential testing.

One of the key steps in the proof is to show that, for any $k \geq 1$, 
	\begin{equation}\label{eq:distance}
		0 \leq  \widehat{t}_k - \eta_k \leq  C_d\frac{\sigma^2 \log(\Delta/\alpha)}{\kappa_k^2} < \Delta/4.
	\end{equation}
	Once a change point is declared, the procedure is restarted afresh using the latest change point estimate as the new initial time.  Due to the last display, we are guaranteed that the new starting point of the procedure, namely $\widehat{t}_k$, is only slightly delayed, by an amount no larger than $\Delta/4$ from the ideal starting point $\eta_k$.  The constant of $1/4$ is not special and could be replaced by any constant smaller than $1/2$.  Equivalently, the next change point will be sufficiently detected starting from   $\widehat{t}_k$; precisely, we have that
	\[
		\eta_{k+1} - \widehat{t}_k = \eta_{k+1} - \eta_k - (\widehat{t}_k - \eta_k) > 3\Delta/4.
	\]
	As demonstrated in our proof, this delay is sufficient, under the signal-to-noise condition in Assumption~\ref{assump-snr-2}, to ensure that the procedure is able to detect the next change point $\eta_{k+1}$ and to localize it with a detection delay no larger than $C_d\sigma^2 \kappa_{k+1}^{-2}\log(\Delta/\alpha)$.

	It is important to notice that the quantity $K$ in the theorem statement is itself a random variable. Furthermore,   the theorem only guarantees that the procedure will be able to identify, with probability at least $ 1- \alpha$ all the $K$ true change point in the interval $\{1,\ldots,T\}$ as long as $T - \eta_K \geq C_d \sigma^2 \kappa_k^{-2}\log(\Delta/\alpha)$. If this condition is not satisfied, i.e.~when the difference between the $k$th time point and the stopping time is too small in relation to the assumed signal-to-noise ratio Assumption~\ref{assump-snr-2}, there is not guarantee that the procedure will detect the last change point $\eta_K$. This is not surprising: the data collected  in such a short amount of time may not in general contain enough information to support that decision.

\section{Optimality}\label{sec-lowerbounds}

\subsection{A lower bound on the detection delay}
In this section we show that the detection delay bounds derived in \Cref{thm-2} are essentially sharp. Towards this end, we adapt to our settings existing techniques for deriving  minimax optimality from the literature on change point analysis, which however only apply asymptotically and under parametric assumptions. In particular, Theorem 1 of \cite{lorden1971procedures} shows that,  assuming independent Gaussian instead of sub-Gaussian data,  
\[
\inf_{\widehat{t}} \sup_{\Delta \geq 1} \esssup \mathbb{E}_{\Delta} \left\{(\widehat{t}-\Delta)^+ \mid X_1, \ldots, X_{\Delta}\right\} \sim \frac{\sigma^2\log(\gamma)}{\kappa^2}, \quad \text{as } \gamma \to \infty.
	\]
where the infimum is taken over all change point estimators $\widehat{t}$ such that $\mathbb{E}_{\infty}(\widehat{t}) \geq \gamma$.
	The last display bears clear similarities with the high-probability delay bounds in \Cref{sec-upperbounds}  and,  in particular, with the guarantees obtained in Proposition~\ref{prop-alternative-type1}.

In our next non-asymptotic result, we formally show that the upper bound on the detection delay of \Cref{thm-2} is in fact in agreement, aside from a $\log(\Delta)$ factor, with a minimax lower bound on the expected detection delay. The proof adapts arguments used  in Theorem 2 of \cite{lai1998information}.  

\begin{proposition}\label{prop-lb}
	Assume that $\{X_i\}_{i = 1, 2, \ldots}$ is a sequence of independent Gaussian random variables satisfying $\mathbb{E}(X_i) = f_i$, $\mathrm{Var}(X_i) = \sigma^2$ and Assumption~\ref{assump-2}.  Denote the joint distribution of $\{X_i\}_{i = 1, 2, \ldots}$ as $P_{\kappa, \sigma, \Delta}$.  For $\alpha \in (0,1)$, consider the class of change point estimators
	\begin{align*}
		& \mathcal{D}(\alpha)  = \Big\{T:\, T \mbox{ is a stopping time with respect to the natural filtration} \\
		& \hspace{4cm}\mbox{and satisfies } \mathbb{P}_{\infty}(T < \infty) \leq \alpha \Big\}.
	\end{align*}
 Then for all $\alpha$, $\kappa$ and $\sigma$ such that $\sigma^2(4\kappa^2)^{-1} \log\left(1/\alpha\right) \geq 3$,
\begin{equation}\label{eq-delta-const}
		\alpha + 2\alpha^{1/4} < 1/2 \quad \mbox{and} \quad \alpha^{5/4}\log(1/\alpha) \leq 2\kappa^2\sigma^{-2},
	\end{equation}
 it holds that, for any change point time $\Delta$,
	\begin{equation}\label{eq-prop-lb-result}
		\inf_{\widehat{t} \in \mathcal{D}(\alpha)} \sup_{P_{\kappa, \sigma, \Delta}}\mathbb{E}_P\left\{(\widehat{t} - \Delta)_+\right\} \geq \frac{\sigma^2}{4\kappa^2} \log\left(\frac{1}{\alpha}\right).
	\end{equation}
\end{proposition}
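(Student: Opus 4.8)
The plan is to establish a minimax lower bound on the expected detection delay via a change-of-measure argument, following the information-theoretic strategy of Theorem 2 in \cite{lai1998information}. The key idea is that any stopping time $\widehat{t}$ with controlled false alarm probability under $\mathbb{P}_\infty$ cannot react too quickly after the change point, because the post-change distribution is statistically close to the pre-change distribution when measured over a short window. Let me make the target quantity precise: I would fix an arbitrary change point location $\Delta$, and show that for any $T \in \mathcal{D}(\alpha)$, the worst-case expected delay $\sup_{P_{\kappa,\sigma,\Delta}} \mathbb{E}_P\{(T - \Delta)_+\}$ is bounded below by $\frac{\sigma^2}{4\kappa^2}\log(1/\alpha)$.

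First I would set up the two-hypothesis comparison. Let $P_0 = \mathbb{P}_\infty$ denote the distribution with no change (constant mean), and let $P_1 = P_{\kappa,\sigma,\Delta}$ denote the distribution with a change at time $\Delta+1$ of magnitude $\kappa$. Since the data are Gaussian with variance $\sigma^2$, the per-observation Kullback--Leibler divergence between the post-change and pre-change marginals is exactly $\mathrm{KL} = \kappa^2/(2\sigma^2)$. The core of the argument is a likelihood-ratio / Wald-type inequality: on the event that $\widehat{t}$ terminates shortly after $\Delta$, the accumulated log-likelihood ratio $\sum_{l=\Delta+1}^{\widehat{t}} \log(dP_1/dP_0)(X_l)$ is controlled, which lets me transfer the false-alarm guarantee under $P_0$ into a lower bound on the delay under $P_1$. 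Concretely, I would define the stopping event $A_m = \{\Delta < \widehat{t} \leq \Delta + m\}$ for a threshold $m = \frac{\sigma^2}{4\kappa^2}\log(1/\alpha)$, and use the change of measure $P_1(A_m) = \mathbb{E}_{P_0}[\mathbbm{1}_{A_m} \exp(L_{\widehat{t}})]$, where $L_t$ is the log-likelihood ratio accumulated from time $\Delta+1$ to $t$.

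Next I would bound the likelihood ratio on $A_m$. On this event, at most $m$ post-change observations have been seen, so the log-likelihood ratio $L_{\widehat{t}}$ has mean at most $m \cdot \mathrm{KL} = \frac{1}{4}\log(1/\alpha)$ and concentrates around this value by standard sub-Gaussian tail control of the Gaussian increments. This allows me to bound $P_1(A_m)$ in terms of $P_0(A_m)$ times roughly $\exp(\frac{1}{4}\log(1/\alpha)) = \alpha^{-1/4}$, plus a deviation term. Since $T \in \mathcal{D}(\alpha)$ enforces $P_0(\widehat{t} < \infty) \leq \alpha$, hence $P_0(A_m) \leq \alpha$, I would obtain $P_1(A_m) \lesssim \alpha^{3/4} + (\text{deviation})$, which the technical conditions \eqref{eq-delta-const} are designed to keep bounded away from $1$. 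Therefore $P_1(\widehat{t} > \Delta + m)$ is bounded below by a constant, and Markov's inequality gives $\mathbb{E}_{P_1}\{(\widehat{t} - \Delta)_+\} \geq m \cdot P_1(\widehat{t} > \Delta + m) \gtrsim \frac{\sigma^2}{4\kappa^2}\log(1/\alpha)$.

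The main obstacle I anticipate is making the change-of-measure step fully rigorous at a \emph{stopping time} rather than a fixed time, since $\widehat{t}$ is random and the likelihood ratio $L_{\widehat{t}}$ must be controlled on the event $A_m$ uniformly. This requires either an optional-stopping argument for the likelihood-ratio martingale or a careful decomposition over the finitely many values $\widehat{t} \in \{\Delta+1, \ldots, \Delta+m\}$ can take on $A_m$, combined with a union bound and the Gaussian concentration of $L_t$. The delicate bookkeeping is to ensure the deviation term stays small while extracting the clean constant $\frac{\sigma^2}{4\kappa^2}$; the precise numerical conditions in \eqref{eq-delta-const}, namely $\sigma^2(4\kappa^2)^{-1}\log(1/\alpha) \geq 3$ and $\alpha + 2\alpha^{1/4} < 1/2$, are exactly what is needed to absorb these deviations and guarantee $P_1(\widehat{t} > \Delta + m)$ exceeds $1/2$, yielding the stated bound.
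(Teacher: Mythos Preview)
Your proposal is essentially the paper's own proof: a change-of-measure (likelihood-ratio) argument splitting the short-stop event $A_m$ according to whether $L_{\widehat t}$ is below or above a threshold, bounding the first piece via the false-alarm constraint under $\mathbb P_\infty$ and the second via Gaussian concentration plus a union bound over the finitely many values $\widehat t$ can take on $A_m$, and concluding with a Markov-type inequality.

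The only slip is quantitative. You set $m=\frac{\sigma^2}{4\kappa^2}\log(1/\alpha)$, but then $\mathbb E_{P_1}\{(\widehat t-\Delta)_+\}\ge m\cdot P_1(\widehat t>\Delta+m)\ge m/2$ only delivers $\frac{\sigma^2}{8\kappa^2}\log(1/\alpha)$, half of what is claimed. The paper instead takes $m=\frac{\sigma^2}{2\kappa^2}\log(1/\alpha)$, splits the log-likelihood ratio at the threshold $\tfrac34\log(1/\alpha)$, shows each piece has $P_1$-probability at most $\alpha^{1/4}$, and then the factor $1-\alpha-2\alpha^{1/4}\ge 1/2$ from the condition $\alpha+2\alpha^{1/4}<1/2$ recovers the stated constant $\frac{\sigma^2}{4\kappa^2}$. (Relatedly, with your $m$ one has $m\cdot\mathrm{KL}=\tfrac18\log(1/\alpha)$, not $\tfrac14$.) Adjust $m$ and the threshold accordingly and your argument goes through verbatim.
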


\begin{remark}
The constants in \eqref{eq-delta-const} have not been optimized and are likely to be improvable.
\end{remark}

Although Proposition~\ref{prop-lb} provides a lower bound for the expected detection delay, inspection of its proof in fact reveals that,  for all $\alpha$ small enough and any stopping time $T \in \mathcal{D}(\alpha)$,
\[
\mathbb{P}_{\kappa, \sigma, \Delta} \left\{T \geq \Delta + \frac{\sigma^2}{4\kappa^2} \log\left(\frac{1}{\alpha}\right) \right\} \geq  1/2.
\]
The bound in the last display differs from the probabilistic upper bound on the detection delay in \Cref{thm-2} by a $\log(\Delta)$ term. Thus, aside possibly from a quantity of that order, the detection delay suffered by our main procedure in \Cref{alg-online} achieves the  minimax rate.

  In the context of multiple change points, the lower bound in Proposition~\ref{prop-lb} further yields that the signal-to-noise ratio condition in Assumption~\ref{assump-snr-2} is almost necessary to guarantee a consistent change point localization. As remarked after the statement of \Cref{thm-multi}, with probability at least $1 - \alpha$, under this condition, the $k$th estimated change point is at a distance of no more than $\Delta/4$ from the $k$th largest true change point, see \eqref{eq:distance}. In contrast, whenever Assumption~\ref{assump-snr-2} is violated in such a way that 
	\begin{equation}\label{eq-low-snr}
		\kappa^2 \Delta/\sigma^2 < \log(1/\alpha)/4,
	\end{equation}
	then \eqref{eq-prop-lb-result} implies that 
	\[
		\frac{\inf_{\widehat{t} \in \mathcal{D}(\alpha)} \sup_{P_{\kappa, \sigma, \Delta}} \mathbb{E}_P\left\{(\widehat{t} - \Delta)_+\right\}}{\Delta} > 1.
	\]
	Thus the expected detection delay is larger than $\Delta$ and, therefore, the true change point cannot be consistently estimated. 

	Overall, we may conclude that  the online multiple change point detection problem is impossible under the low signal-to-noise ratio condition $\kappa^2 \Delta/\sigma^2 \lesssim \log(1/\alpha)$. In contrast, a slightly larger signal-to-noise ratio $\kappa^2 \Delta/\sigma^2 \gtrsim \log(\Delta/\alpha)$ is sufficient to guarantee, as shown in \Cref{thm-multi}, that the change points can be accurately estimated with probability no smaller than the prescribed nominal level of $1- \alpha$. The difference between the two signal-to-noise conditions is a factor of order $\log(\Delta)$, thus showing that the procedure described in \Cref{alg-online-multi} succeeds even in the most difficult settings where the signal-to-noise ratio approaches the infeasibility threshold. A similar phase transition phenomenon occurs for the offline mean change localization problem, see \cite{wang2018univariate}.  In fact, the phase transition boundary is nearly identical in both the online and offline settings. 
	
\subsection{Connections with offline change point detection problems}\label{sec-offline}

A closely related area is the offline change point detection, where one has data $\{X_i\}_{i = 1}^T$ and seeks change point estimators $\{\widehat{\eta}_k\} \subset \{1, \ldots, T\}$.  The online and offline change point analysis shares many similarities.  The offline change point results we list below can be found in \cite{wang2018univariate}.

The signal-to-noise ratio and the phase transition.  Let $\Delta$ be the minimal spacing between two consecutive change points in the offline setting.  We remark that in both online and offline problems, the signal-to-noise ratio is of the same form $\kappa^2 \Delta /\sigma^2$.  In both problems, the parameter spaces can be partitioned into feasibility and infeasibility regimes by this signal-to-noise ratio.  In \cite{wang2018univariate}, it is shown that in the univariate offline change point detection problem, both the lower and upper bounds on the signal-to-noise ratio are of order $\log(T)$.  This sheds some light that the logarithmic factor gap between the lower bound we established in Proposition~\ref{prop-lb} and \eqref{eq-low-snr}, and the upper bound we assumed in Assumption~\ref{assump-snr}, is due to a loose lower bound but not the upper bound.

The estimation errors in both these two problems have a minimax lower bound $\sigma^2/\kappa^2$, and the upper bounds we achieve are both nearly optimal, off possibly by a logarithmic factor.

When deriving the estimation error, since one has collected all the data in advance in the offline setting, the signal-to-noise ratio is lower bounded by $\log(T)$ and as a result, the estimation error only depends on the model parameters.  This is not the case in the online setting, where the total number of data points examined is also a random variable.  In this case, additional information is needed.  In \Cref{thm-2}, we choose to control the upper bound of the type I error $\alpha$.  As a result, the estimation error, i.e.~the detection delay, is a function also of $\alpha$.

\section{Discussion}\label{sec-discussions}

We have investigated various aspects of online mean change point estimation with sub-Gaussian, independent variates. In our analysis, we have adopted finite sample settings and allowed for the model parameters to vary so as to express all the sources of statistical hardness in the problem. We have presented simple but effective algorithms for online change point localization based on classical CUSUM process that are provably nearly optimal. We have also identified combinations of model parameters for which the change point estimation task is impossible and showed that our procedure succeeds over nearly all the other combinations, including those for which the problem is most difficulty and barely solvable. Interestingly, the online estimation rates and phase transition boundaries we determine are nearly identical to those holding in the offline settings, see \cite{wang2018univariate}. The framework we established in this paper can be used to study different high-dimensional problems.  This will be left as future work. 

In order to control the fluctuations of the CUSUM statistics, we have adopted two different approaches.  Theorems~\ref{thm-2} and \ref{thm-multi} rely on peeling arguments, as detailed in Lemmas~\ref{lem-alt-cumsum-prob} and \ref{lem-concen-multi}.  In contrast, the results  in \Cref{sec-variants} rely on simpler union bound arguments. Interestingly, even using peeling, we were only able to establish that the CUSUM statistic process
\[
		t \mapsto \sup_{t \geq 2}\max_{1 \leq s < t}\left|\left\{\frac{s}{t(t-s)}\right\}^{1/2} \sum_{l = s+1}^t (X_l - f_l)\right|,
	\]
has fluctuations of order $\sqrt{ \{\log (t) + \log(\alpha) \}/t}$. One may have expected to see a better dependence on $t$, such as in $\sqrt{\loglog(t)}$. However, the above CUSUM process does not have a clear martingale structure. As a result, peeling techniques only yield crossing boundaries that scale like $\sqrt{\log(t)}$, albeit with better constants than those of the same order stemming from union bound arguments. Since we have not made attempts to optimize constants, both approaches -- peeling and union bounds --  deliver the same localzation rates in our settings.

\appendix

\section{Concentration inequalities}\label{sec-concentration-inequalities}

\begin{lemma}\label{lem-alt-cumsum-prob}
For any $\alpha > 0$, it holds that
	\begin{align*}
		& \mathbb{P}\Bigg\{ \exists s, t \in \mathbb{N},\, t > 1, \, s \in [1, t): \, \left|\left(\frac{t-s}{ts}\right)^{1/2} \sum_{l = 1}^s (X_l - f_l) - \left\{\frac{s}{t(t-s)}\right\}^{1/2} \sum_{l = s+1}^t (X_l - f_l)\right|\\
		& \hspace{2cm} > 2^{3/2}\sigma \log^{1/2}(t/\alpha)\Bigg\} \leq \alpha.
	\end{align*}
\end{lemma}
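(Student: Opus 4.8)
The plan is to recognize the quantity inside the probability as a normalized weighted sum of independent, mean-zero sub-Gaussian variables, to control it at each fixed time by a Gaussian-type tail bound, and then to pay for the uniformity over the pairs $(s,t)$ by a union bound over $s$ followed by a summation over $t$, the latter being summable precisely because the multiplicative constant in $b_t = 2^{3/2}\sigma\log^{1/2}(t/\alpha)$ exceeds $2$.

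First I would pass to the centered variables $Y_l = X_l - f_l$, which are independent, mean zero, and sub-Gaussian with $\|Y_l\|_{\psi_2}\lesssim\sigma$ by Assumption~\ref{assump-1}. For fixed $1\le s<t$ I would write the bracketed expression as $W_{s,t}=\sum_{l=1}^t a_l Y_l$, where $a_l=(\frac{t-s}{ts})^{1/2}$ for $l\le s$ and $a_l=-(\frac{s}{t(t-s)})^{1/2}$ for $s<l\le t$. A one-line computation gives
\[
\sum_{l=1}^t a_l^2 \;=\; s\cdot\frac{t-s}{ts}+(t-s)\cdot\frac{s}{t(t-s)}\;=\;\frac{t-s}{t}+\frac{s}{t}\;=\;1,
\]
so $W_{s,t}$ is a contrast with unit squared weights. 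Consequently $W_{s,t}$ is itself sub-Gaussian with variance proxy of order $\sigma^2$, and standard sub-Gaussian concentration yields, for every fixed pair $(s,t)$ and every $x>0$, a Gaussian-type bound of the form $\mathbb{P}(|W_{s,t}|>x)\le 2\exp(-x^2/(2\sigma^2))$.

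Next I would control the running maximum. For each fixed $t\ge2$, a union bound over the $t-1$ admissible values of $s$ gives $\mathbb{P}(\max_{1\le s<t}|W_{s,t}|>b_t)\le 2(t-1)\exp(-b_t^2/(2\sigma^2))$. Substituting $b_t=2^{3/2}\sigma\log^{1/2}(t/\alpha)$ makes $b_t^2/(2\sigma^2)=4\log(t/\alpha)$, so the right-hand side is $2(t-1)(t/\alpha)^{-4}\le 2\alpha^4 t^{-3}$. Summing over $t\ge2$ and using $\sum_{t\ge2}t^{-3}=\zeta(3)-1<1/2$, together with $\alpha^4<\alpha$ for $\alpha\in(0,1)$, bounds the total probability by $\alpha$, which is the claimed inequality. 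More generally, writing $b_t=C_1\sigma\log^{1/2}(t/\alpha)$ the exponent becomes $C_1^2/2$, and the double sum $\sum_t (t-1)t^{-C_1^2/2}$ converges exactly when $C_1>2$, which is the content of the condition \eqref{eq-C-1-cond} referenced in the text.

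The main obstacle I anticipate is not the union/summation bookkeeping but pinning down the correct variance proxy so that the per-pair tail is genuinely $2\exp(-x^2/(2\sigma^2))$ with $\sigma$ the $\psi_2$-bound of the raw $X_l$: the centering step $X_l\mapsto X_l-f_l$ and the passage from the $\psi_2$-norm to a sub-Gaussian moment generating function bound each introduce absolute constants, and these must be tracked carefully to justify that the threshold $2^{3/2}\sigma$ (rather than some larger multiple of $\sigma$) already suffices. An alternative to the union bound over $s$ is a peeling argument that groups the values of $s$ into dyadic blocks; this is slightly more delicate but has the advantage of producing the sharper $\sigma\sqrt{\loglog(t/\alpha)}$ control of $\max_{1\le s<t}\widehat D_{s,t}$ at any fixed $t$ alluded to in the discussion following \Cref{thm-2}, while still yielding the same $\sqrt{\log t}$ curved boundary after one sums over $t$.
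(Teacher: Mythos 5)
Your proof is correct, but it takes a genuinely different route from the paper's. The paper controls the supremum over $(s,t)$ by a peeling argument in $t$: it partitions time into dyadic blocks $2^j \le t < 2^{j+1}$, union-bounds over the at most $2^{2j+1}$ pairs in each block against a boundary $\varepsilon_t$ whose square is $2\sigma^2[2\log t + \loglog t + \log\{\log t+\log 2\} - 2\loglog 2 - \log\alpha]$, and then verifies via \eqref{eq-C-1-cond} that $2^{3/2}\sigma\log^{1/2}(t/\alpha)$ dominates this $\varepsilon_t$. You instead run a direct union bound over all pairs $(s,t)$, exploiting that $b_t^2/(2\sigma^2)=4\log(t/\alpha)$ turns the per-$t$ contribution into $2(t-1)\alpha^4 t^{-4}\le 2\alpha^4 t^{-3}$, which sums to less than $\alpha$. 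Your calculation is sound, and your observation that any constant $C_1>2$ makes the double sum converge is exactly the content of the paper's condition \eqref{eq-C-1-cond}; as the Discussion section notes, peeling only improves the constants in front of $\sqrt{\log t}$, not the rate, so for this lemma the two approaches are interchangeable and yours is the more elementary. The one caveat — which you correctly flag yourself — is the conversion from the Orlicz $\psi_2$-bound on the raw $X_l$ to the moment-generating-function-type tail $2\exp\{-x^2/(2\sigma^2)\}$ for the centered, unit-$\ell_2$-weighted contrast $W_{s,t}$: this passage (centering, and Hoeffding for weighted sums of $\psi_2$-bounded variables) introduces absolute constants that would in principle inflate $\sigma$. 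The paper makes exactly the same leap by citing (2.9) of \cite{wainwright2019high} for a generic mean-zero $W$ with $\|W\|_{\psi_2}\le\sigma$, so this is not a gap relative to the paper's own standard of rigor, but if one insisted on tracking those constants the threshold multiplier $2^{3/2}$ would need to be enlarged in both arguments.
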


\begin{proof}
It holds that for any sequence $\{\varepsilon_t > 0\}$,
	\begin{align*}
		& \mathbb{P}\Bigg\{\exists s, t \in N,\, t > 1, \, s \in [1, t): \, \Bigg|\left(\frac{t-s}{ts}\right)^{1/2} \sum_{l = 1}^s (X_l - f_l) \\
		& \hspace{2cm} - \left\{\frac{s}{t(t-s)}\right\}^{1/2} \sum_{l = s+1}^t (X_l - f_l)\Bigg| \geq \varepsilon_t\Bigg\}	 \\
		\leq & \sum_{j = 1}^{\infty} \mathbb{P}\left\{\max_{2^j \leq t < 2^{j+1}} \max_{1 \leq s < t} \left|\left(\frac{t-s}{ts}\right)^{1/2} \sum_{l = 1}^s (X_l - f_l) - \left\{\frac{s}{t(t-s)}\right\}^{1/2} \sum_{l = s+1}^t (X_l - f_l)\right| \geq \varepsilon_t\right\} \\
		\leq & \sum_{j = 1}^{\infty} 2^j \max_{2^j \leq t < 2^{j+1}} \mathbb{P}\left\{\max_{1 \leq s < t} \left|\left(\frac{t-s}{ts}\right)^{1/2} \sum_{l = 1}^s (X_l - f_l) - \left\{\frac{s}{t(t-s)}\right\}^{1/2} \sum_{l = s+1}^t (X_l - f_l)\right| \geq \varepsilon_t\right\} \\
		\leq & \sum_{j = 1}^{\infty} 2^j \max_{2^j \leq t < 2^{j+1}} t \mathbb{P} \left\{\left|W\right| \geq \varepsilon_t\right\} \leq \sum_{j = 1}^{\infty} 2^{2j+1} \mathbb{P} \left\{\left|W\right| \geq \varepsilon'_j\right\},
	\end{align*}
	where $W$ is a mean zero sub-Gaussian random variable with $\|W\|_{\psi_2} \leq \sigma$ and $\varepsilon_t = \varepsilon'_j$, for any $t \in \{2^j, \ldots, 2^{j+1} - 1\}$, $j = 1, 2, \ldots$.  For any $t = 1, 2, \ldots$, let
	\[
		\varepsilon_t = \sqrt{2}\sigma \left[2\log(t) + \loglog(t) + \log\left\{\log(t) + \log(2)\right\}- 2\loglog(2) - \log(\alpha)\right]^{1/2}.
	\]
	Due to the sub-Gaussianity, we have that for any $\zeta > 0$, $\mathbb{P} \left\{\left|W\right| \geq \zeta \right\} < 2\exp(-2^{-1}\zeta^2/\sigma^2)$ \citep[e.g.~(2.9) in][]{wainwright2019high}. 
	\begin{align*}
		& \mathbb{P}\Bigg\{\exists s, t \in N,\, t > 1, \, s \in [1, t): \, \Bigg|\left(\frac{t-s}{ts}\right)^{1/2} \sum_{l = 1}^s (X_l - f_l) \\
		& \hspace{2cm} - \left\{\frac{s}{t(t-s)}\right\}^{1/2} \sum_{l = s+1}^t (X_l - f_l)\Bigg| \geq \varepsilon_t\Bigg\}	\\
		\leq & \sum_{j = 1}^{\infty} \max_{2^j \leq t \leq 2^{j+1}}\exp [(2j+2)\log(2) - 2\log(t) - \loglog(t) - \log\{\log(t) + \log(2)\} \\
		& \hspace{2cm}+ 2\loglog(2) + \log(\alpha) ] \\
		\leq & \sum_{j = 1}^{\infty} \exp \Bigg\{(2j+2)\log(2) - 2j\log(2) - 2\log(j) - \log\{(j+1)\log(2)\} \\
		& \hspace{2cm} + 2\loglog(2) + \log(\alpha)\Bigg\}  \\
		\leq & \alpha \sum_{j = 1}^{\infty} \frac{1}{j(j+1)} \leq \alpha.
	\end{align*}
	For simplicity, we let 
	\[
		\varepsilon_t =  2^{3/2}\sigma \log^{1/2}(t/\alpha),
	\]
	which satisfies that for any $t \geq 2$ and $\alpha \in (0, 1)$,
	\begin{align} 
		& 2^{3/2}\log^{1/2}(t/\alpha) \geq \sqrt{2}[2\log(t) + \loglog(t) + \log\left\{\log(t) + \log(2)\right\} \nonumber \\
		& \hspace{2cm} - 2\loglog(2) - \log(\alpha)]^{1/2}.\label{eq-C-1-cond}
	\end{align}
	We therefore completes the proof.
\end{proof}

\begin{lemma}\label{lem-concen-multi}
For any $\alpha > 0$, it holds that
	\begin{align*}
	& \mathbb{P}\Bigg\{\exists e, s, t \in \mathbb{N},\, e \geq 0, \, s > e, t > s:\, \Bigg|\left\{\frac{t-s}{(s-e)(t-e)}\right\}^{1/2} \sum_{l =e+1}^s (X_l - f_l) \\
	& \hspace{0.5cm}  - \left\{\frac{s-e}{(t-s)(t-e)}\right\}^{1/2} \sum_{l = s+1}^t (X_l - f_l)\Bigg| > 4\sigma\log^{1/2}(t/\alpha) \Bigg\} \leq \alpha.
	\end{align*}	
\end{lemma}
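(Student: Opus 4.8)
The plan is to mirror the peeling argument behind Lemma~\ref{lem-alt-cumsum-prob}, which handles the anchored case $e=0$, but to carry an extra union over the anchor index $e$. The one structural fact that makes everything go through is that, for each fixed triple $(e,s,t)$ with $0 \leq e < s < t$, the centered statistic
\[
Z_{e,s,t} = \left\{\frac{t-s}{(s-e)(t-e)}\right\}^{1/2}\sum_{l=e+1}^s (X_l - f_l) - \left\{\frac{s-e}{(t-s)(t-e)}\right\}^{1/2}\sum_{l=s+1}^t(X_l-f_l)
\]
is a weighted sum of independent mean-zero sub-Gaussians whose squared coefficients sum exactly to one: the $(s-e)$ terms in the first block each carry weight-squared $(t-s)/\{(s-e)(t-e)\}$ and the $(t-s)$ terms in the second block each carry weight-squared $(s-e)/\{(t-s)(t-e)\}$, so the total is $\{(t-s)+(s-e)\}/(t-e)=1$. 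Hence $Z_{e,s,t}$ has variance proxy $\sigma^2$ and, exactly as for the quantity $W$ in the proof of Lemma~\ref{lem-alt-cumsum-prob}, the sub-Gaussian tail bound gives $\mathbb{P}\{|Z_{e,s,t}| \geq \zeta\} < 2\exp\{-\zeta^2/(2\sigma^2)\}$ for every $\zeta>0$.

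First I would peel on the largest index $t$, splitting $t$ into dyadic blocks $t \in [2^j, 2^{j+1})$ for $j \geq 1$ (note $t \geq 2$ always, since $0 \leq e < s < t$). Because $\varepsilon_t := 4\sigma\log^{1/2}(t/\alpha)$ is increasing in $t$, on each block the event $\{|Z_{e,s,t}| > \varepsilon_t\}$ is contained in $\{|Z_{e,s,t}| > \varepsilon_{2^j}\}$, so the threshold may be replaced by the constant $\varepsilon_{2^j}$ throughout the block; I then apply a plain union bound over all admissible pairs $(e,s)$ and all $t$ in the block. The decisive difference from Lemma~\ref{lem-alt-cumsum-prob} is combinatorial: for each $t$ there are now $\binom{t}{2} < t^2/2$ pairs $(e,s)$ rather than the $t$ choices of $s$ in the anchored case, so each dyadic block carries at most $\sum_{t=2^j}^{2^{j+1}-1} t^2/2 < 2^{3j+1}$ triples.

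The heart of the estimate is then to verify that the constant $4$ is large enough to beat this inflated $2^{3j}$ count. With $\varepsilon_{2^j}=4\sigma\log^{1/2}(2^j/\alpha)$ one has $\varepsilon_{2^j}^2/(2\sigma^2)=8\log(2^j/\alpha)=8j\log 2 + 8\log(1/\alpha)$, so each block contributes at most $2^{3j+2}\exp\{-\varepsilon_{2^j}^2/(2\sigma^2)\} = 2^{3j+2}\,2^{-8j}\alpha^8 = 4\cdot 2^{-5j}\alpha^8$. Summing the geometric series over $j \geq 1$ yields $4\alpha^8\sum_{j\geq1}2^{-5j} = 4\alpha^8/31 < \alpha$ for $\alpha \in (0,1)$, which is the claimed bound (with considerable room to spare).

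I expect the main obstacle to be precisely this bookkeeping of the combinatorial factor: passing from anchored ($e=0$) to free $e$ inflates the per-$t$ count from $O(t)$ to $O(t^2)$, i.e.\ one extra factor of $t$ per block, which is why the constant must be raised from $2^{3/2}$ to $4$. It is worth noting that a naive reduction---applying Lemma~\ref{lem-alt-cumsum-prob} separately to each shifted sequence $\{X_{e+1}, X_{e+2},\ldots\}$ with its own error budget $\beta_e$---does not work: matching the absolute-time threshold $4\sigma\log^{1/2}(t/\alpha)$ forces $\beta_e \gtrsim \alpha^2/e$, whose sum over $e$ diverges. Peeling globally on the absolute time $t$, so that all triples sharing a given $t$ inherit the same large threshold, is exactly what avoids this divergence, and the remaining steps are the routine verifications above.
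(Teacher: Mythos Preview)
Your proof is correct and follows essentially the same peeling-plus-union-bound strategy as the paper: peel $t$ into dyadic blocks $[2^j,2^{j+1})$, count the admissible $(e,s,t)$ triples in each block as $O(2^{3j})$, and use the unit-variance-proxy sub-Gaussian tail for $Z_{e,s,t}$. Your version is in fact a bit more streamlined than the paper's: the paper first splits into the cases $e=0$ and $e\geq 1$ (handling the first via Lemma~\ref{lem-alt-cumsum-prob} with budget $\alpha/2$) and then, for $e\geq 1$, peels separately on $e$ as well as on $t$ before arriving at the same $2^{3j+2}$ count and an intermediate threshold $\varepsilon_t^{(2)}$ that is only at the end rounded up to $4\sigma\log^{1/2}(t/\alpha)$; you skip both the case split and the $e$-peeling and work directly with the final constant, which makes the arithmetic cleaner and leaves visible slack.
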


\begin{proof}
For any integer triplet $(e, s, t)$, $0 \leq e < s < t$, any sequence $\{\varepsilon_{e, s, t} > 0\}$, let 
	\begin{align*}
		\mathcal{G}(e, s, t) & = \Bigg\{\Bigg|\left\{\frac{t-s}{(s-e)(t-e)}\right\}^{1/2} \sum_{l =e+1}^s (X_l - f_l) \\
		& \hspace{2cm} - \left\{\frac{s-e}{(t-s)(t-e)}\right\}^{1/2} \sum_{l = s+1}^t (X_l - f_l)\Bigg|  > \varepsilon_{e, s, t}\Bigg\} \\
		& = \left\{G(e, s, t) > \varepsilon_{e, s, t}\right\}.
	\end{align*}

We have that
\begin{align*}
	& \mathbb{P}\left\{\exists e, s, t \in N,\, e \geq 0, \, s > e, t > s:\, \mathcal{G}(e, s, t) \right\} \\
	\leq & \mathbb{P}\left\{\exists s, t \in N,\, e = 0, \, s > e, t > s:\, \mathcal{G}(0, s, t) \right\} \\
	& \hspace{2cm} + \mathbb{P}\left\{\exists s, t \in N,\, e \geq 1, \, s > e, t > s:\, \mathcal{G}(e, s, t) \right\}\\
	= & (I) + (II).
\end{align*}

As for $(I)$, it follows from Lemma~\ref{lem-alt-cumsum-prob} that, with $\varepsilon_{0, s, t}$ defined as 
	\begin{align*}
		& \varepsilon_{0, s, t} = \varepsilon_t^{(1)} = \sqrt{2}\sigma [2\log(t) + \loglog(t) + \log\{\log(t) + \log(2)\} \\
		& \hspace{2cm} - 2\loglog(2) + \log(2) - \log(\alpha)]^{1/2},
	\end{align*}
	we have that 
	\[ 
		(I) \leq \alpha/2.
	\]
	
As for $(II)$, we have that 
	\begin{align*}
		(II) & \leq \sum_{j = 1}^{\infty}\mathbb{P}\left\{\max_{2^j \leq t < 2^{j+1}} \max_{1 \leq e < t-1} \max_{e < s < t} G(e, s, t) > \varepsilon_{e, s, t}\right\} \\
		& \leq \sum_{j = 1}^{\infty} \sum_{m = 0}^{j-1} \mathbb{P}\left\{\max_{2^j \leq t < 2^{j+1}} \max_{2^m \leq e < 2^{m+1}} \max_{e < s < t} G(e, s, t) > \varepsilon_{e, s, t}\right\} \\
		& \hspace{2cm} + \sum_{j = 1}^{\infty} \mathbb{P}\left\{\max_{2^j \leq t < 2^{j+1}} \max_{2^j \leq e < t} \max_{e < s < t} G(e, s, t) > \varepsilon_{e, s, t}\right\} \\
		& < \sum_{j = 1}^{\infty} \left(\sum_{m = 0}^{j-1} 2^j 2^m 2^{j+1} + 2^j 2^j 2^j\right) \max_{\substack{e < s < t \\ 2^j \leq t < 2^{j+1}}} \mathbb{P}\{\mathcal{G}(e, s, t)\} \\
		& \leq \sum_{j = 1}^{\infty} 2^{3j + 2} \max_{\substack{e < s < t \\ 2^j \leq t < 2^{j+1}}} \mathbb{P}\{\mathcal{G}(e, s, t)\}.
	\end{align*}
For any $e > 0$, let
	\begin{align*}
		& \varepsilon_{e, s, t} = \varepsilon_t^{(2)} = \sqrt{2}\sigma [3\log(t) + \loglog(t) + \log\{\log(t) + \log(2)\} \\
		& \hspace{2cm} + 3\log(2) - 2\loglog(2) - \log(\alpha)]^{1/2}.
	\end{align*}
	Due to the sub-Gaussianity, we have that
	\[
		(II) \leq \alpha/2.
	\]

For simplicity, we let 
	\[
		\varepsilon_t = 4\sigma\log^{1/2}(t/\alpha),
	\]	
	such that
	\begin{align} 
		& 4\log^{1/2}(t/\alpha) \geq 2^{1/2}[3\log(t) + \loglog(t) + \log\{\log(t) + \log(2)\} \nonumber \\
		& \hspace{2cm} + 3\log(2) - 2\loglog(2) - \log(\alpha)]^{1/2}. \label{eq-C-4-conddd}
	\end{align}
	We therefore completes the proof.
\end{proof}

\section{Proofs of the main results}\label{sec-proofs-theorems}

\begin{proof}(of \Cref{thm-2})
\noindent Step 1. Define the event 
	\begin{align*}
	& \mathcal{B} = \Bigg\{\forall s, t \in N,\, t > 1, \, s \in [1, t):\, \Bigg|\left(\frac{t-s}{ts}\right)^{1/2} \sum_{l = 1}^s (X_l - f_l) \\
	& \hspace{2cm} - \left\{\frac{s}{t(t-s)}\right\}^{1/2} \sum_{l = s+1}^t (X_l - f_l)\Bigg| < b_{t} \Bigg\}.
	\end{align*}	
	It follows from Lemma~\ref{lem-alt-cumsum-prob} that $\mathbb{P}\{\mathcal{B}\} > 1-\alpha$. Throughout the proof we assume that the event $\mathcal{B}$ holds.

For any $s, t \in N$, $1 \leq s < t$, it holds that $\left|\widehat{D}_{s, t} - D_{s, t}\right| < b_{t}$, which implies that 
	\begin{equation}\label{eq-2-22222}
		D_{s, t} + b_{t} > \widehat{D}_{s, t} > D_{s, t} - b_{t}.
	\end{equation}

Step 2.	For any $t \leq \Delta$, we have that $D_{s, t} = 0$, for all $s \in [1, t)$.  Thus, using \eqref{eq-2-22222}, we conclude that, $\widehat{t} > t$ and, therefore that $\widehat{t} > \Delta$.

Step 3.  Now we consider any $t > \Delta$.  If there exists $s \in [1, t)$ such that $\widehat{D}_{s, t} > b_{t}$, then $d \leq t - \Delta$.  Thus, $d \leq \widetilde{t} - \Delta$, where 
	\[
		\widetilde{t} = \min\{t > \Delta, \exists s \in [\Delta, t), \widehat{D}_{s, t} > b_{t}\},
	\]
	and any upper bound on  $\widetilde{t}$ will also be an upper bound on $d$, when the signal-to-noise constraint specified by Assumption~\ref{assump-snr} is satisfied. Thus, our task becomes that of computing a sharp upper bound on  $\widetilde{t}$. To that effect, notice that, when $\Delta \leq s < t$, 
	\[
		D_{s, t} = \Delta \left(\frac{t-s}{ts}\right)^{1/2}|\mu_1 - \mu_2| = \Delta \left(\frac{t-s}{ts}\right)^{1/2} \kappa,
	\]	
	and, because of  \eqref{eq-2-22222} again,
	\[
		\widehat{D}_{s, t} \geq \Delta \left(\frac{t-s}{ts}\right)^{1/2} \kappa - b_{t}.
	\]
	As a result, we obtain that $\widetilde{t} \leq t^*$, where
	\[
		t_* = \min\left\{t > \Delta: \, \max_{s \in [\Delta, t)} \left\{\Delta \left(\frac{t-s}{ts}\right)^{1/2} \kappa - 2b_{t}\right\} \geq 0 \right\}.
	\]

Step 4. Write for convenience  $m = t^* - \Delta$, so that $d \leq m$. Recalling that $b_t = 2^{3/2}\sigma\log^{1/2}(t/\alpha)$, we seek the smallest integer $m$ such that
\[
	\max_{s \in [\Delta, m + \Delta)} \left[\Delta\kappa \left\{\frac{m+\Delta - s}{(m+\Delta)s}\right\}^{1/2}  - 2^{5/2}\sigma\log^{1/2}\{(m+\Delta)/\alpha\}\right] > 0,
\]
which is equivalent to finding the smallest integer $m$ such that
\[
	\max_{s \in [\Delta, m + \Delta)} \left[\Delta^2\kappa^2 - 32\sigma^2\frac{s(m + \Delta)}{m+\Delta-s} \log\left\{(m+\Delta)/\alpha\right\}\right] > 0.
\]
In turn, the above task corresponds to that of computing the smallest integer $m$ such that
\begin{align*}
	\Delta^2\frac{\kappa^2}{\sigma^2} & > \min_{s \in [\Delta, m + \Delta)} \left[32 \frac{s(m + \Delta)}{m+\Delta-s} \log\left\{(m+\Delta)/\alpha\right\}\right] \\
	& = 32 \frac{\Delta(m + \Delta)}{m}\log\left\{(m+\Delta)/\alpha\right\},
\end{align*}
or, equivalently, such that
\begin{equation}\label{eq-m-thm-1}
	m \left[\frac{\Delta\kappa^2}{32\sigma^2} - \log\left\{(m+\Delta)/\alpha\right\}\right] > \Delta  \log\left\{(m+\Delta)/\alpha\right\},
\end{equation}
under Assumption~\ref{assump-snr}.

Let $C_d$ be an absolute constant large enough and also upper bounded by $C_{\mathrm{SNR}}$.
	The claimed result now follows once we show that that the value
	\[
		m^* = \lceil C_d \log(\Delta/\alpha) \sigma^2\kappa^{-2} \rceil
	\] 
	satisfies \eqref{eq-m-thm-1}. To see this, assume for simplicity that $C_d \log(\Delta/\alpha) \sigma^2\kappa^{-2}$ is an integer; if not, the proof only requires trivial modifications. We first point out that $m^* \leq \Delta$ because of Assumption~\ref{assump-snr} and the fact that $C_d \leq C_{\mathrm{SNR}}$. Now, the left hand side of inequality \eqref{eq-m-thm-1} is equal, for this choice of $m$, to
	\begin{equation}\label{eq:step.in.thm1}
C_d \log(\Delta/\alpha) \frac{\Delta}{32} - C_d \frac{\sigma^2}{\kappa^2} \log(\Delta/\alpha) \log \left\{\frac{C_d \log(\Delta/\alpha) \sigma^2 / \kappa^2 + \Delta }{\alpha}\right\}.
	\end{equation}
	Using again Assumption~\ref{assump-snr} and the fact that $m^* \leq \Delta$, the second term in the previous expression is upper bounded by 
	\[
		\frac{2C_d}{C_{\mathrm{SNR}}} \Delta \log(\Delta/\alpha),	
	\]
	due to the fact that $2\log(x) \geq \log(2x)$, $x \geq 2$. Thus, the quantity in \eqref{eq:step.in.thm1} is lower bounded by
\begin{align*}
\Delta \log(\Delta/\alpha)  \left( \frac{C_d}{32} - \frac{2C_d}{C_{\mathrm{SNR}}} \right) & \geq 2\Delta \log(\Delta/\alpha)\geq \Delta \log(2 \Delta/\alpha)\geq \Delta \log\left( (m^* + \Delta)/\alpha \right),
	\end{align*}
where the first inequality is justified by first choosing a large enough $C_d$ and then choosing $C_{\mathrm{SNR}}$ larger than $C_d$, and the second and third inequalities follow from  $\log(\Delta/\alpha) \geq 0$ and  $m^* \leq \Delta$, respectively. Thus, combining the last display with \eqref{eq-m-thm-1} and  \eqref{eq:step.in.thm1} yields  (\ref{eq:thm-2.main}).  Finally, (\ref{eq:thm-2.delta.infty}) and (\ref{eq:thm-2.any.delta})   follow immediately from Steps 1 and 2.  
\end{proof}

\begin{proof}(of Proposition~\ref{prop-undetectable})
It follows from Step 1 in the proof of \Cref{thm-2} that on the event $\mathcal{B}$, it holds that
	\[
		\widehat{D}_{s, t} < D_{s, t} + 2^{3/2}\sigma \log^{1/2}(t/\alpha), \quad 1 \leq s < t.
	\]
	It follows from Step 2 in the proof of \Cref{thm-2} that we only need to consider $t > \Delta$.  This leaves us two situations $s \leq \Delta$ and $s \geq \Delta$.  In fact in both these two situations, one only needs to deal with the case $s = \Delta$, therefore we only show $s \geq \Delta$ here.
	
When $\Delta \leq s < t$, we have $D_{s, t} = \kappa \Delta \{(t-s)/ts\}^{1/2}$ and therefore on the event $\mathcal{B}$, we have that 
	\begin{align*}
		& \max_{\Delta \leq s < t}\widehat{D}_{s, t}  \leq \max_{\Delta \leq s < t} \kappa \Delta \left(\frac{t-s}{ts}\right)^{1/2} + 2^{3/2}\sigma \log^{1/2}(t/\alpha) \\
		\leq &  \kappa\Delta \left(\frac{t-s}{ts}\right)^{1/2} + 2^{3/2}\sigma \log^{1/2}(t/\alpha) \leq \kappa \Delta^{1/2} + 2^{3/2}\sigma \log^{1/2}(t/\alpha) < b_t,
	\end{align*}
	where the last inequality follows from \eqref{eq-low-snr-rrrr}.  We therefore completes the proof.
\end{proof}

\begin{proof}(of Proposition~\ref{prop-alternative-type1})

Step 1. Define the event 
	\begin{align*}
	\mathcal{C} & = \Bigg\{t \in \{2, \ldots, \gamma+2\}, \, s\in N \cap [1, t):\, \Bigg|\left(\frac{t-s}{ts}\right)^{1/2} \sum_{l = 1}^s (X_l - f_l) \\
	& \hspace{2cm} - \left\{\frac{s}{t(t-s)}\right\}^{1/2} \sum_{l = s+1}^t (X_l - f_l)\Bigg| < b_{t} \Bigg\}.
	\end{align*}	
	We have that
	\begin{align*}
		& 1 - \mathbb{P}\{\mathcal{C}\}\\
		\leq & (\gamma+1) \max_{t\in \{2, \ldots, \gamma+2\}} \mathbb{P}\Bigg\{\exists s\in N \cap [1, t):\, \Bigg|\left(\frac{t-s}{ts}\right)^{1/2} \sum_{l = 1}^s (X_l - f_l) \\
		& \hspace{2cm} - \left\{\frac{s}{t(t-s)}\right\}^{1/2} \sum_{l = s+1}^t (X_l - f_l)\Bigg| \geq b_{t} \Bigg\}	 \\
		\leq & (\gamma+1)^2 \mathbb{P}\left\{\left|W\right| > b_{t} \right\} \leq  2(\gamma+1)^2 \exp\left(-\frac{b_{t}^2}{2\sigma^2}\right) = (\gamma + 1)^{-1},
	\end{align*}
	where $W$ is a mean zero sub-Gaussian random variable with $\|W\|_{\psi_2} \leq \sigma$ and the Hoeffding inequality \citep[e.g.~(2.9) in][]{wainwright2019high}.

	Therefore we have that
	\begin{align*}
		& \mathbb{E}_{\infty}(\widehat{t}) = \sum_{t = 2}^{\infty} \mathbb{P}\{\widehat{t} \geq t\} \geq \sum_{t = 2}^{\gamma +  2} \mathbb{P}\{\widehat{t} \geq t\} \geq (\gamma +1 )\mathbb{P}\{\widehat{t} > \gamma + 2\} \geq (\gamma + 1)\left(1 - \frac{1}{\gamma+1}\right)  = \gamma.
	\end{align*} 

Step 2.  We have that
	\[
		\mathbb{P}\{\widehat{t} > \gamma + 2\} = \mathbb{P}\{\mathcal{C}\} \geq 1 - (\gamma + 1)^{-1},
	\]
	When $\gamma > \max\{\Delta - 2, 1\}$, it holds that
	\[
		\mathbb{P}\{d > 0\} \geq 1 - (\gamma+1)^{-1}.
	\]
	For any $t > \Delta$, if there exists $s \in [1, t)$ such that $\widehat{D}_{s, t} > b_{t}$, then $d \leq t - \Delta$.  It suffices to find 
	\[
		\widetilde{t} = \min\{t: \, t > \Delta, \exists s \in (\Delta, t), \widehat{D}_{s, t} > b_{t}\},
	\]
	and any upper bound on $\widetilde{t}$ will also be an upper bound on $d$, when the signal-to-noise ratio condition in \eqref{eq-m-gamma-cond} is satisfied.  Then, our task becomes that of computing a sharp upper bound on $\widetilde{t}$.  To that effect, notice that, when $\Delta \leq s < t$, 
	\[
		D_{s, t} = \Delta \left(\frac{t-s}{ts}\right)^{1/2}|\mu_1 - \mu_2| = \Delta \left(\frac{t-s}{ts}\right)^{1/2}\kappa,
	\]	
	and on the event $\mathcal{C}$,
	\[
		\widehat{D}_{s, t} \geq \Delta \left(\frac{t-s}{ts}\right)^{1/2}\kappa - b_{t}.
	\]
	As a result, we obtain that $\widetilde{t} \leq t^*$, where
	\[
		t_* = \min\left\{t > \Delta: \, \max_{s \in (\Delta, t)} \left\{\Delta \left(\frac{t-s}{ts}\right)^{1/2} \kappa - 2b_{t}\right\} \geq 0 \right\}.
	\]

Step 3.  Write for convenience $m = t - \Delta$, so that $d \leq m$.  Recalling that 
	\[
		b_t = 6^{1/2}\sigma\log^{1/2}\left\{2^{1/3} (\gamma + 1)\right\},
	\] 
	we seek the smallest integer $m$ such that 
\[
	\max_{s \in [\Delta, m + \Delta)} \left[\Delta\kappa \left\{\frac{m+\Delta - s}{(m+\Delta)s}\right\}^{1/2}  - 2\times 6^{1/2}\sigma\log^{1/2}\left\{2^{1/3} (\gamma + 1)\right\}\right] > 0,
\]
which is equivalent to finding the smallest integer $m$ such that
\begin{equation*}
	\max_{s \in [\Delta, m + \Delta)} \left\{\Delta^2\kappa^2 - 24\sigma^2\frac{s(m + \Delta)}{m+\Delta-s} \log\left\{2^{1/3} (\gamma + 1)\right\}\right\} > 0.
\end{equation*}
In turn, the above task corresponds to that of computing the smallest integer $m$ such that 
\begin{align*}t
	\Delta^2\kappa^2 & > \min_{s \in [\Delta, m + \Delta)} \left\{24\sigma^2\frac{s(m + \Delta)}{m+\Delta-s} \log\left\{2^{1/3} (\gamma + 1)\right\}\right\} \\
	& = 24\sigma^2\frac{\Delta(m + \Delta)}{m} \log\left\{2^{1/3} (\gamma + 1)\right\},
\end{align*}
or, equivalently, such that 
\begin{equation}\label{eq-aaaaaaaaaaaaaaaa}
	m \left[\frac{\Delta\kappa^2}{24\sigma^2} - \log\left\{2^{1/3} (\gamma + 1)\right\}\right] > \Delta  \log\left\{2^{1/3} (\gamma + 1)\right\}.
\end{equation}

Let $C_d'$ be a large enough absolute constant.  The claimed result now follows once we show that the value
\[
	m^* = \Big\lceil C_d' \sigma^2 \kappa^{-2}\log\left\{2^{1/3} (\gamma + 1)\right\}\Big\rceil
\]
satisfies \eqref{eq-aaaaaaaaaaaaaaaa}.  To see this, assume for simplicity that $C_d' \sigma^2 \kappa^{-2}\log\left\{2^{1/3} (\gamma + 1)\right\}$ is an integer; if not, the proof only requires trivial modifications.  Now, the left hand side of inequality \eqref{eq-aaaaaaaaaaaaaaaa} is equal, for this choice of $m$, to
	\begin{equation}\label{eq:step.in.thm111111}
C_d' \log\left\{2^{1/3} (\gamma + 1)\right\} \frac{\Delta}{24} - C_d'\sigma^2\kappa^{-2}\log^2\left\{2^{1/3} (\gamma + 1)\right\}.
	\end{equation}
	Using again \eqref{eq-m-gamma-cond}, the second term in the previous expression is upper bounded by 
	\[
\frac{2C_d'}{C_{\mathrm{SNR}}} \Delta \log\left\{2^{1/3} (\gamma + 1)\right\},
	\]	
	due to the fact that $2\log(x) \geq \log\{2^{1/3}(x+1)\}$, $x \geq 2$.  Thus, the quantity in \eqref{eq:step.in.thm111111} is lower bounded by
\begin{align*}
\Delta \log\left\{2^{1/3} (\gamma + 1)\right\} \left( \frac{C_d'}{24} - \frac{2C_d'}{C_{\mathrm{SNR}}} \right) & \geq 2\Delta \log\left\{2^{1/3} (\gamma + 1)\right\} \geq \Delta \log\left\{2^{1/3} (\gamma + 1)\right\},
	\end{align*}
where the first inequality is justified with first choosing a large enough $C_d$, then a large enough $C_{\mathrm{SNR}}$.  Thus we conclude the proof with $C_d = 2C_d'$.
\end{proof}

\begin{proof}(of Proposition~\ref{prop-alter-2})
When $\Delta = \infty$, assuming that $f_1 = f_2 = \ldots$ and letting $e_{\gamma, t} = \max\{t-\gamma+1,\, 1\}$, we have that
	\begin{align*}
		& \sup_{v > 1} \mathbb{P}(v \leq \widehat{t} \leq v + \gamma) \\
		\leq & \sup_{v > 1} \mathbb{P}\left\{\exists s \in N \cap (v, v + \gamma), \, t \in N \cap [e_{\gamma, t}, t-1]: \, \widehat{D}_{e_{\gamma, t}, s, t} > b_{\gamma}\right\} \\
		\leq & \gamma^2 \mathbb{P}\left\{\left|W\right| > b_{\gamma}\right\} \leq \alpha,
	\end{align*}
	where $W$ is a mean zero sub-Gaussian random variable with $\|W\|_{\psi_2} \leq \sigma$ and satisfies that any $\zeta > 0$, $\mathbb{P} \left\{\left|W\right| \geq \zeta \right\} < 2\exp(-2^{-1}\zeta^2/\sigma^2)$.  
		
When Assumption~\ref{assump-2} is imposed, we consider $t > \Delta$.  For any $\Delta < t < \Delta + \gamma/2$, if there exists $s \in [1, t)$ such that $\widehat{D}_{e_{\gamma}, s, t} > b_{\gamma}$, then $d \leq t - \Delta$.  It suffices to find 
	\[
		\widetilde{t} = \min\{t: \, \Delta < t < \Delta + \gamma/2, \widehat{D}_{t-\gamma, \Delta, t} > b_{\gamma}\},
	\]
	and any upper bound on $\widetilde{t}$ will also be an upper bound on $d$, when \eqref{eq-alter-gamma-cond} is satisfied.  Thus, our task becomes that of computing a sharp upper bound on $\widetilde{t}$.  To that effect, notice that, when $\Delta = s < t < \Delta + \gamma/2$, we have 
	\[
		D_{t - \gamma, \Delta, t} = \left\{\frac{(t-\Delta)(\Delta -t + \gamma)}{\gamma}\right\}^{1/2}|\mu_1 - \mu_2| = \left\{\frac{(t-\Delta)(\Delta -t + \gamma)}{\gamma}\right\}^{1/2}\kappa.
	\]	
	In addition, it holds that
	\begin{align*}
		\mathbb{P}\{\mathcal{G}\} = \mathbb{P}\left\{\exists t \in (\Delta, \Delta + \gamma/2): \, |\widehat{D}_{t-\gamma, \Delta, t} - D_{t-\gamma, \Delta, t}| > b_{\gamma}\right\} \leq \gamma/2 \alpha \gamma^{-2} < \alpha.
	\end{align*}
	Therefore on the event $\mathcal{G}$,
	\[
		\widehat{D}_{t-\gamma, \Delta, t} \geq \left\{\frac{(t-\Delta)(\Delta -t + \gamma)}{\gamma}\right\}^{1/2}\kappa - b_{\gamma}.
	\]
	As a result, we obtain that $\widetilde{t} \leq t^*$, where
	\[
		t_* = \min\left\{t \in (\Delta, \Delta + \gamma/2): \, \left\{\frac{(t-\Delta)(\Delta -t + \gamma)}{\gamma}\right\}^{1/2}\kappa - 2b_{\gamma} \geq 0 \right\}.
	\]

Write for convenience $m = t_* - \Delta$, so that $d \leq m$.  Due to the form of $b_{\gamma}$, we seek the smallest integer $m$ such that
	\[
		\left\{\frac{m(\gamma - m)}{\gamma}\right\}^{1/2}\kappa - 2^{3/2}\sigma\log^{1/2}\left(2\gamma^2\alpha^{-1}\right) > 0.
	\]
	We now to show that $m = \lceil C_d \sigma^2 \log(\gamma/\alpha) \kappa^{-2} \rceil$ satisfies the above.  Due to \eqref{eq-alter-gamma-cond}, we have that $(\gamma - m)/\gamma \geq 1/2$, then 
	\begin{align*}
		\frac{m(\gamma - m)}{\gamma}\kappa^2 \geq \kappa^2 m/2 >  2^{3/2}\sigma\log^{1/2}\left(2\gamma^2\alpha^{-1}\right),
	\end{align*}
	which completes the proof.
\end{proof}

\begin{proof}(of Corollary~\ref{prop-new})
\noindent Step 1. Define the event 
	\begin{align*}
		& \mathcal{B} = \Bigg\{\forall s, t \in N,\, t > 1, \, s \in \{t-2^0, \ldots, t-2^{\lfloor\log(t)/\log(2)\rfloor}\}: \\
		& \hspace{2cm} \left|\left(\frac{t-s}{ts}\right)^{1/2} \sum_{l = 1}^s (X_l - f_l) - \left\{\frac{s}{t(t-s)}\right\}^{1/2} \sum_{l = s+1}^t (X_l - f_l)\right| < b_{t} \Bigg\}.
	\end{align*}	
	It follows from Lemma~\ref{lem-alt-cumsum-prob} that $\mathbb{P}\{\mathcal{B}\} > 1-\alpha$.  Throughout the proof we assume that the event $\mathcal{B}$ holds.

For any $s, t \in N$, $1 \leq s < t$, it holds that $\left|\widehat{D}_{s, t} - D_{s, t}\right| < b_{t}$, which implies that 
	\begin{equation}\label{eq-2-2}
		D_{s, t} + b_{t} > \widehat{D}_{s, t} > D_{s, t} - b_{t}.
	\end{equation}

For any $t \geq 1$, we have that 
	\[
		\mathcal{S}(t) = \{t-2^0, \ldots, t-2^{\lfloor\log(t)/\log(2)\rfloor}\}.
	\]
	
Step 2.	For any $t \leq \Delta$, we have that $D_{s, t} = 0$, for all $s \in \mathcal{S}(t)$.  Using \eqref{eq-2-2}, we conclude that $\widehat{t} > \Delta$ and therefore that $\widehat{t} > \Delta$.

Step 3.  Now we consider $t > \Delta$.  If there exists $s \in \mathcal{S}(t)$ such that $\widehat{D}_{s, t} > b_{t}$, then $d \leq t - \Delta$.  Thus, $d \leq \widetilde{t} - \Delta$, where
	\[
		\widetilde{t} = \min\{t: \, t > \Delta, \exists s \in [\Delta, t) \cap  \mathcal{S}(t), \widehat{D}_{s, t} > b_{t}\},
	\]
	and any upper bound on $\widetilde{t}$ will also be an upper bound on $d$, when the signal-to-noise ratio constraint specified by Assumption~\ref{assump-snr} is satisfied.  Thus, our task becomes that of computing a sharp upper bound on $\widetilde{t}$.  To that effect, notice that, when $\Delta \leq s < t$, 
	\[
		D_{s, t} = \Delta \left(\frac{t-s}{ts}\right)^{1/2}|\mu_1 - \mu_2| = \Delta \left(\frac{t-s}{ts}\right)^{1/2} \kappa,
	\]	
	and, because of \eqref{eq-2-2} again,
	\[
		\widehat{D}_{s, t} \geq \Delta \left(\frac{t-s}{ts}\right)^{1/2} \kappa - b_{t}.
	\]
	As a result, we obtain that $\widetilde{t} \leq t^*$, where
	\[
		t_* = \min\left\{t > \Delta: \, \max_{s \in [\Delta, t) \cap  \mathcal{S}(t)} \left\{\Delta \left(\frac{t-s}{ts}\right)^{1/2} \kappa - 2b_{t}\right\} \geq 0 \right\}.
	\]

Step 3.  Write for convenience $m = t - \Delta$, so that $d \leq m$.  Recalling that $b_t = 2^{3/2}\sigma\log^{1/2}(t/\alpha)$, we seek the smallest integer $m$ such that
\[
	\max_{s \in [\Delta, m + \Delta) \cap  \mathcal{S}(m + \Delta)} \left[\Delta\kappa \left\{\frac{m+\Delta - s}{(m+\Delta)s}\right\}^{1/2}  - 2^{5/2} \sigma\log^{1/2}\{(m+\Delta)/\alpha\}\right] > 0,
\]
which is equivalent to 
\[
	\max_{s \in [\Delta, m + \Delta) \cap  \mathcal{S}(m + \Delta)} \left[\Delta^2\kappa^2 - 32\sigma^2\frac{s(m + \Delta)}{m+\Delta-s} \log\left\{(m+\Delta)/\alpha\right\}\right] > 0.
\]
In turn, the above task corresponds to that of computing the smallest integer $m$ such that
\begin{align*}
	\Delta^2\kappa^2 > \min_{s \in [\Delta, m + \Delta) \cap  \mathcal{S}(m + \Delta)} \left[32\sigma^2\frac{s(m + \Delta)}{m+\Delta-s} \log\left\{(m+\Delta)/\alpha\right\}\right].
\end{align*}

Since $[\Delta, m + \Delta) \cap  \mathcal{S}(m + \Delta) = \{m+\Delta - 2^j, \, j = 0, \ldots, \lfloor \log(m)/\log(2)\rfloor\}$, in the following we consider two cases.

\medskip
\noindent Case 1.  If $\log(m)/\log(2)$ is an integer, then it follows from identical arguments in the proof of \Cref{thm-2} that
\begin{align*}
	\Delta^2\kappa^2 & > \min_{s \in [\Delta, m + \Delta) \cap  \mathcal{S}(t)} \left[32\sigma^2\frac{s(m + \Delta)}{m+\Delta-s} \log\left\{(m+\Delta)/\alpha\right\}\right] \\
	& = 32\sigma^2\frac{\Delta(m + \Delta)}{m}\log\left\{(m+\Delta)/\alpha\right\}.
\end{align*}

\medskip
\noindent Case 2.  If $\log(m)/\log(2)$ is not an integer, then $\lfloor \log(m)/\log(2)\rfloor < \log(m)/\log(2)$.  This means
\begin{align*}
	\Delta^2\kappa^2 & > \min_{s \in [\Delta, m + \Delta) \cap  \mathcal{S}(t)} \left[32\sigma^2\frac{s(m + \Delta)}{m+\Delta-s} \log\left\{(m+\Delta)/\alpha\right\}\right] \\
	& > 32\sigma^2\frac{\Delta(m + \Delta)}{m}\log\left\{(m+\Delta)/\alpha\right\}.
\end{align*}

Both cases lead to finding the smallest integer $m$ such that
	\[
		m \left[\frac{\Delta\kappa^2}{32\sigma^2} - \log\left\{(m+\Delta)/\alpha\right\}\right] > \Delta  \log\left\{(m+\Delta)/\alpha\right\}.
	\]
	It follows from the identical arguments in the proof of \Cref{thm-2} that we complete the proof.
\end{proof}

\begin{proof}(of \Cref{thm-multi})
Step 1. To prove the theorem, it is sufficient (in fact, equivalent) to show that the claim holds true simultaneously over all deterministic stopping times. Towards that end, define the event 
	\begin{align*}
	& \mathcal{E} = \Bigg\{\forall e, s, t \in \mathbb{N},\, e \geq 0, \, s > e, \, t > s:\, \Bigg|\left\{\frac{t-s}{(s-e)(t-e)}\right\}^{1/2} \sum_{l =e+1}^s (X_l - f_l) \\
	& \hspace{2cm} - \left\{\frac{s-e}{(t-s)(t-e)}\right\}^{1/2} \sum_{l = s+1}^t (X_l - f_l)\Bigg| <b_t \Bigg\}.
	\end{align*}	
	Then, under $\mathcal{E}$, the fluctuations of the CUSUM process are controlled uniformly at any time $t$, including any stopping (in fact, more generally, random) time $T$.
	It follows from Lemma~\ref{lem-concen-multi} that 
	\[
		\mathbb{P}\{\mathcal{E}\} > 1-\alpha.
	\]
	On the event $\mathcal{E}$, for any $0\leq e < s < t$, it holds that
	\[
		\left|\widehat{D}_{e, s, t} - D_{e, s, t}\right| < b_t,
	\]
	which implies that 
	\begin{equation}\label{eq-2-multi}
		D_{e, s, t} + b_{e, t} > \widehat{D}_{e, s, t} > D_{e, s, t} - b_{e, t}.
	\end{equation}

In addition, due to Assumption~\ref{assump-snr-2}, we have that 
	\begin{equation}\label{eq-error-delta}
		C_d\frac{\sigma^2 \log(\Delta/\alpha)}{\kappa^2} \leq \Delta/4.
	\end{equation}
	Then it suffices to show that
	\begin{enumerate}
	\item [(i)]	for any refresh starting point of the algorithm $e$ and any interval $(e, t]$ not containing any true change points, on the event $\mathcal{E}$, there is no detected change point;
	\item [(ii)] on the event $\mathcal{E}$, we can detect $\eta_k$ with delay upper bounded by $C_d \sigma^2 \log(\Delta/\alpha) \kappa_k^{-2}$.
	\end{enumerate}

Step 2.	As for (i), it holds automatically due to the definition of the event $\mathcal{E}$.  The claim (i) leads to that $\widehat{t}_k > \eta_k$.

Step 3.	As for (ii), we prove by induction.  When $k = 0$, we have $\widehat{t}_k = \eta_k = 0$, then $\eta_1 - \widehat{t}_0 \geq \Delta \geq 3\Delta/4$.  It follows from identical arguments in the proof of \Cref{thm-2} that 
	\[
		d_1 = \widehat{t}_1 - \eta_1 \leq C_d \frac{\sigma^2 \log(\Delta/\alpha)}{\kappa_1^2}.
	\]
	Due to \eqref{eq-error-delta}, we have that $\eta_2 - \widehat{t}_1 \geq 3\Delta/4$, then due to \Cref{alg-online-multi}, the procedure restarts by setting $e = \widehat{t}_1$.  For a general $k \geq 1$, if $\eta_k - \widehat{t}_{k-1} \geq 3\Delta/4$, then it follows from  from identical arguments in the proof of \Cref{thm-2} that 
	\[
		d_k = \widehat{t}_k - \eta_k \leq C_d \frac{\sigma^2 \log(\Delta/\alpha)}{\kappa_k^2},
	\]
	which completes the proof.

\end{proof}

\begin{proof}(of Proposition~\ref{prop-lb})
Throughout the proof we will assume for simplicity that $\frac{\sigma^2}{2\kappa^2} \log\left(\frac{1}{\alpha}\right)$ is an integer.
Step 1.  For any $n$, let $P^n$ be the restrictions of a distribution $P$ to $\mathcal{F}_n$, i.e.~the $\sigma$-field generated by the observations $\{X_i\}_{i = 1}^n$.  For any $\nu \geq 1$ and $n \geq \nu$, we have that for any $n \geq \Delta$, it holds that
	\begin{align*}
		\frac{dP_{\kappa, \sigma, \nu}^ n}{dP_{\kappa, \sigma, \infty}^n} = \exp\left(\sum_{i = \nu + 1}^n Z_i\right),
	\end{align*}
	where $P_{\kappa, \sigma, \infty}$ indicates the joint distribution under which there is no change point and
	\[
		Z_i = \frac{\mu_2 - \mu_1}{\sigma^2} \left(X_i - \frac{\mu_1 + \mu_2}{2}\right).
	\]

	For any $\nu \geq 1$, define the event
	\[
		\mathcal{E}_{\nu} = \left\{\nu < T < \nu + \frac{\sigma^2}{2\kappa^2} \log\left(\frac{1}{\alpha}\right), \, \sum_{i = \nu + 1}^T Z_i < \frac{3}{4}\log\left(\frac{1}{\alpha}\right)\right\}.
	\]
	Then we have
	\begin{align}
		& \mathbb{P}_{\kappa, \sigma, \nu}(\mathcal{E}_{\nu}) = \int_{\mathcal{E}_{\nu}} \exp\left(\sum_{i = \nu + 1}^T Z_i\right) \, dP_{\kappa, \sigma, \infty} \leq \exp\left\{(3/4) \log(1/\alpha)\right\} \mathbb{P}_{\kappa, \sigma, \infty}(\mathcal{E}_{\nu}) \nonumber \\
		\leq & \exp\left\{(3/4) \log(1/\alpha)\right\} \mathbb{P}_{\kappa, \sigma, \infty}\left\{\nu < T < \nu + \frac{\sigma^2}{2\kappa^2} \log\left(\frac{1}{\alpha}\right)\right\} \leq \alpha^{-3/4}\alpha = \alpha^{1/4}, \label{eq-lb-2-leq}
	\end{align}
	where the first two inequalities follow from the definition of $\mathcal{E}_{\nu}$, and the last inequality follows from the definition of $\mathcal{D}(\alpha)$.
	 
Step 2.  For any $\nu \geq 1$ and $T \in \mathcal{D}(\alpha)$, since $\{T \geq \nu\} \in \mathcal{F}_{\nu - 1}$, we have that
	\begin{align*}
	& \mathbb{P}_{\kappa, \sigma, \nu} \left\{\nu < T < \nu + \frac{\sigma^2}{2\kappa^2}\log\left(\frac{1}{\alpha}\right), \, \sum_{i = \nu + 1}^T Z_i \geq (3/4)\log(1/\alpha) \, \Big | \, T > \nu \right\}	 \\
	\leq & \esssup \mathbb{P}_{\kappa, \sigma, \nu} \left\{\max_{1 \leq t \leq \frac{\sigma^2}{2\kappa^2}\log\left(\frac{1}{\alpha}\right) - 1} \sum_{i = \nu + 1}^{\nu + t} Z_i \geq (3/4) \log(1/\alpha) \, \Big | X_1, \ldots, X_{\nu}\right\} \\
	\leq & \esssup \mathbb{P}_{\kappa, \sigma, \nu} \Bigg[\max_{1 \leq t \leq \frac{\sigma^2}{2\kappa^2}\log\left(\frac{1}{\alpha}\right)-1} \sum_{i = \nu + 1}^{\nu + t} \{Z_i - \kappa^2/(2\sigma^2)\} \geq (3/4) \log(1/\alpha) \\
	& \hspace{4cm} - \frac{\sigma^2}{2\kappa^2} \log\left(\frac{1}{\alpha}\right)\frac{\kappa^2}{2\sigma^2} \Big | X_1, \ldots, X_{\nu}\Bigg] \\
	\leq & \esssup \mathbb{P}_{\kappa, \sigma, \nu} \Bigg[\max_{1 \leq t \leq \frac{\sigma^2}{2\kappa^2}\log\left(\frac{1}{\alpha}\right)-1} \sum_{i = \nu + 1}^{\nu + t} \{Z_i - \kappa^2/(2\sigma^2)\} \geq (1/2) \log(1/\alpha)\, \Big | X_1, \ldots, X_{\nu}\Bigg] \\
	\leq & \frac{\sigma^2}{2\kappa^2}\log\left(\frac{1}{\alpha}\right) \exp \left\{-\frac{(1/2) \log^2(1/\alpha)}{\frac{\sigma^2}{2\kappa^2}\log\left(\frac{1}{\alpha}\right) \frac{\kappa^2}{\sigma^2}}\right\} = \frac{\sigma^2}{2\kappa^2}\log\left(\frac{1}{\alpha}\right) \exp \left\{- \log(1/\alpha)\right\} \leq  \alpha^{1/4},
	\end{align*}
	where the fourth inequality follows from the Hoeffding inequality and a union bound argument, and the last inequality holds for small enough $\alpha$ such that 
	\[
		\alpha^{5/4}\log(1/\alpha) \leq 2\kappa^2\sigma^{-2}.
	\]  
	Since the upper bound is independent of $\nu$, it holds that
	\[
		\sup_{\nu \geq 1}\mathbb{P}_{\kappa, \sigma, \nu} \left\{\nu < T < \nu + \frac{\sigma^2}{2\kappa^2}\log\left(\frac{1}{\alpha}\right), \, \sum_{i = \nu + 1}^T Z_i \geq (3/4)\log(1/\alpha) \, \Big | \, T \geq \nu \right\}	\leq \alpha^{1/4},
	\]
	which leads to 
	\begin{align}\label{eq-lb-1-geq}
		\sup_{\nu \geq 1}\mathbb{P}_{\kappa, \sigma, \nu} \left\{\nu < T < \nu + \frac{\sigma^2}{2\kappa^2}\log\left(\frac{1}{\alpha}\right), \, \sum_{i = \nu + 1}^T Z_i \geq (3/4)\log(1/\alpha) \right\} \leq \alpha^{1/4}
	\end{align}
	Combining \eqref{eq-lb-2-leq} and \eqref{eq-lb-1-geq}, we have	
	\begin{equation}\label{eq-lb-1-comb}
		\sup_{\nu \geq 1}\mathbb{P}_{\kappa, \sigma, \nu} \left\{\nu < T < \nu + \frac{\sigma^2}{2\kappa^2}\log\left(\frac{1}{\alpha}\right)\right\} \leq 2\alpha^{1/4}.
	\end{equation}

Step 3.  We now have, for any change point time $\Delta$,
	\begin{align*}
		 & \mathbb{E}_{\kappa, \sigma, \Delta} \left\{(T - \Delta)_+	\right\} \geq \frac{\sigma^2}{2\kappa^2} \log(1/\alpha) \mathbb{P}_{\kappa, \sigma, \Delta}\left\{T - \Delta \geq \frac{\sigma^2}{2\kappa^2} \log(1/\alpha)\right\} \\
		= & \frac{\sigma^2}{2\kappa^2} \log(1/\alpha)\left[\mathbb{P}_{\kappa, \sigma, \Delta}\{T > \Delta\} - \mathbb{P}_{\kappa, \sigma, \Delta}\left\{\Delta < T  < \Delta + \frac{\sigma^2}{2\kappa^2}\log\left(\frac{1}{\alpha}\right) \right\}\right] \\
		\geq & \frac{\sigma^2}{2\kappa^2} \log(1/\alpha) (1 - \alpha - 2\alpha^{1/4}) \geq \frac{\sigma^2}{4\kappa^2} \log(1/\alpha),
	\end{align*}
	where the first inequality is due to Markov's inequality, the second is due to \eqref{eq-lb-1-comb} and the definition of the class of $\mathcal{D}(\alpha)$ of stopping times (which in particular implies that $\mathbb{P}_{\kappa, \sigma, \Delta}\{T \leq \Delta\} = \mathbb{P}_\infty(T \leq \Delta) \leq \mathbb{P}_\infty(T < \infty) \leq \alpha$), and the last holds when $\alpha + 2\alpha^{1/4} < 1/2$. 
\end{proof}

\bibliographystyle{ims}
\bibliography{ref}

\end{document}